\newtheorem{definition}{Def\text{}inition}[section]
\newtheorem{theorem}[definition]{Theorem}
\newtheorem{example}[definition]{Example}
\newtheorem{lemma}[definition]{Lemma}
\newtheorem{proposition}[definition]{Proposition}
\newtheorem{corollary}[definition]{Corollary}
\newtheorem{question}[definition]{Question}
\newtheorem{observation}[definition]{Observation}
\begin{document}

\title{ \bf \large Weak normality properties in $\Psi$-spaces}
\author{ \small SERGIO A. GARCIA-BALAN AND PAUL J. SZEPTYCKI}
\date{}
\maketitle

\begin{abstract} 
\noindent Almost disjoint families of true cardinality $\mathfrak{c}$ are used to produce an example of a mildly-normal not partly-normal $\Psi$-space and a quasi-normal not almost-normal $\Psi$-space. This is related with a problem posed by Kalantan in \cite{K2017} where he asks whether there exists a mad family so that the related Mr\'owka-Isbell space is partly-normal. In addition, a consistent example of a Luzin mad family such that its associated $\Psi$-space is quasi-normal is provided.
\end{abstract}

\let\thefootnote\relax\footnote{\today }
\let\thefootnote\relax\footnote{2020 \emph{Mathematics Subject Classification}. Primary 54D15; Secondary 54G20. }
\let\thefootnote\relax\footnote{ \emph{Key words and phrases.} Almost disjoint family, Mr\'owka-Isbell Psi-spaces, mildly-normal, partly-normal, quasi-normal, almost-normal. }

\section{Introduction}

\noindent Mr\'owka-Isbell $\Psi$-spaces give a number of interesting counterexamples in many areas of topology including normality and related covering properties  (\cite{Mr}, \cite{GJ}). $\Psi$-spaces associated to maximal almost disjoint families are never normal. Weakenings of normality have been considered in the literaure since the late 60's and early 70's . For instance, quasi-normal \cite{Za}, almost-normal \cite{AS}, mildly-normal \cite{Shch}, \cite{SS}, and more recently $\pi$-normal \cite{K2008} and partly-normal  \cite{K2017}. In \cite{KS} L. Kalantan and the second author prove that any product of ordinals is mildly-normal. Kalantan builds a $\Psi$-space which is not mildly-normal in \cite{K2002} and, in \cite{K2017}, using {\bf CH} constructs a mad family so that the associated $\Psi$-space is quasi-normal.\\

\noindent Standard notation is followed and any undefined term can be found in \cite{E}. A subset $A$ of a topological space $X$ is called \emph{regularly closed} (also called closed domain), if $A = \overline{int(A)}$ ($cl_X(A)$ or simply $cl(A)$ will denote the closure of $A$ in the space $X$ as well). A set $A$ will be called \emph{$\pi$-closed}, if $A$ is a finite intersection of regularly closed sets. Two subsets $A$ and $B$ of a topological space $X$ are said to be \emph{separated} if there exist two disjoint open sets $U$ and $V$ of $X$ such that $A\subseteq U$ and $B \subseteq V$. 

\newpage

\begin{definition}A regular space $X$ is called:
\begin{enumerate}
\item \emph{$\pi$-normal \cite{K2008}} if any two nonintersecting sets $A$ and $B$, where $A$ is closed and $B$ is $\pi$-closed, are separated.
\item \emph{almost-normal \cite{AS}} if any two nonintersecting sets $A$ and $B$, where $A$ is closed and $B$ is regularly closed, are separated.
\item \emph{quasi-normal \cite{Za}} if any two nonintersecting $\pi$-closed sets $A$ and $B$ are separated.
\item \emph{partly-normal \cite{K2017}} if any two nonintersecting sets $A$ and $B$, where $A$ is regular closed and $B$ is $\pi$-closed, are separated.
\item \emph{mildly-normal (also called $\kappa$-normal), \cite{Shch} \cite{SS}} if any two nonintersecting regular closed sets $A$ and $B$ are separated.
\end{enumerate}
\end{definition}

\noindent Since ``regular closed $\rightarrow$ $\pi$-closed  $\rightarrow$ closed'' holds, it follows that normal spaces are $\pi$-normal and:  
\begin{displaymath}
 \begin{array}{ccccccccc}
                                 & \nearrow & \textit{quasi-normal} & \searrow &                                   &                  &  \\
 \textit{$\pi$-normal} &               &                                   &               & \textit{partly-normal} & \rightarrow & \textit{mildly-normal.} \\
                                  & \searrow & \textit{almost-normal} & \nearrow &                                 &                    &  
\end{array} 
\end{displaymath}

\begin{proposition}
\label{PiandAlmostNormalCoincide}
Almost-normal spaces are $\pi$-normal.
\end{proposition}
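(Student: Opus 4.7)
The plan is to prove the statement by induction on the number of regularly closed sets appearing in the $\pi$-closed representation. Fix a closed set $A$ and a $\pi$-closed set $B=B_1\cap\cdots\cap B_n$ disjoint from $A$; we aim to separate them. The base case $n=1$ is exactly the definition of almost-normality, so there is nothing to do.

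For the inductive step, suppose the statement holds when $B$ is an intersection of at most $n$ regularly closed sets, and let $B=B_1\cap\cdots\cap B_{n+1}$ be disjoint from the closed set $A$. I would first apply the inductive hypothesis to the closed set $A\cap B_{n+1}$ and the $n$-fold intersection $B'=B_1\cap\cdots\cap B_n$: these are disjoint (since $A\cap B=\emptyset$), so there are disjoint open sets $U_1,V_1$ with $A\cap B_{n+1}\subseteq U_1$ and $B'\subseteq V_1$. Next, the set $A\setminus U_1$ is closed and, by construction, disjoint from the regularly closed set $B_{n+1}$, so almost-normality provides disjoint open sets $U_2,V_2$ with $A\setminus U_1\subseteq U_2$ and $B_{n+1}\subseteq V_2$.

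To finish I would check that $U_1\cup U_2$ and $V_1\cap V_2$ are the required witnesses. Since $A\cap U_1\subseteq U_1$ and $A\setminus U_1\subseteq U_2$, we have $A\subseteq U_1\cup U_2$. Likewise $B=B'\cap B_{n+1}\subseteq V_1\cap V_2$. Disjointness follows from $U_1\cap V_1=\emptyset$ (which gives $U_1\cap(V_1\cap V_2)=\emptyset$) together with $U_2\cap V_2=\emptyset$ (which gives $U_2\cap(V_1\cap V_2)=\emptyset$).

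There is no real obstacle here; the only care needed is in tracking which closed set plays the role of ``$A$'' in each invocation (the trick being to peel off $B_{n+1}$ first, which forces us to work with $A\cap B_{n+1}$ rather than $A$ itself, and then to clean up with a second application of almost-normality to $A\setminus U_1$).
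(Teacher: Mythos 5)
Your proof is correct and follows essentially the same induction as the paper's: intersect the closed set with one factor, separate that piece from the rest, clean up the leftover $A\setminus U_1$ with a second separation, and combine the witnesses as $U_1\cup U_2$ and $V_1\cap V_2$. The only (immaterial) difference is the order of the two invocations --- you apply the inductive hypothesis first and almost-normality second, whereas the paper peels off the single regularly closed factor $K_n$ via almost-normality first and then invokes the inductive hypothesis.
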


\begin{proof}
Assume $X$ is an almost-normal space. For a positive integer $n$, call a set $n$-$\pi$-closed, if it is the intersection of $n$ many regular closed sets. We will show by induction on $n$, that in $X$ every $n$-$\pi$-closed set can be separated from a closed set, provided they are disjoint. This is enough to show that $X$ is $\pi$-normal.\\
\emph{Base case:} $n=1$. Since $X$ is almost normal, every closed $H$ and $1$-$\pi$-closed set $K$ in $X$ such that $H \cap K= \emptyset$ can be separated ($K$ is a regular closed set).
\emph{Inductive step:} Assume that for all $1\le i \le n$ if $H$  is closed, $K$ is $i$-$\pi$-closed in $X$ and, $H \cap K= \emptyset$, then $H$ and $K$ can be separated. Let $H \subset X$ be a closed set and let $K$ be an $(n+1)$-$\pi$-closed set such that $H \cap K = \emptyset$. Thus, $K = \bigcap _{0\le j \le n}K_j$, where each $K_j$ is a regular closed set in $X$. We show that $H$ and $K$ can be separated.\\

\noindent \emph{Case 1:} $H \cap (\bigcap _{j < n}K_j) = \emptyset$ ($H \cap K_n = \emptyset$).\\ 
Then, by the inductive hypothesis, we can find $U, V \subset X$ open such that $U \cap V =\emptyset$, $H \subseteq U$, $\bigcap _{j < n}K_j \subseteq V$ ($K_n \subseteq V$). Since $K \subseteq \bigcap _{j < n}K_j$ ($K \subseteq K_n$), $H$ and $K$ are separated by $U$ and $V$.\\

\noindent \emph{Case 2:} $H \cap (\bigcap _{j < n}K_j)\neq \emptyset \neq H \cap K_n$.\\
Given that $H \cap K = \emptyset$, $[H \cap (\bigcap _{j < n}K_j)] \cap K_n = \emptyset$. In addition, $H \cap (\bigcap _{j < n}K_j)$ is closed, non-empty and $K_n$ is a regular closed set, since $X$ is almost-normal, there are $U_n, V_n \subset X$ open such that $U_n \cap V_n = \emptyset$, $H \cap (\bigcap _{j < n}K_j) \subseteq U_n$, $K_n \subseteq V_n$.\\
\noindent Now, $H \smallsetminus U_n = H \cap (X \smallsetminus U_n)$ is closed, non-empty (since $H \cap K_n \subseteq H \smallsetminus U_n)$, and disjoint from $\bigcap _{j < n}K_j$, which is an $n$-$\pi$-closed set. Hence, by the inductive hypothesis, there are $U_K, V_K \subset X$ open such that $U_K \cap V_K = \emptyset$, $H \smallsetminus U_n \subseteq U_K$, $\bigcap _{j < n}K_j \subseteq V_K$. Let $U = U_n \cup U_K$, $V = V_n \cap V_K$.\\

\noindent \emph{Claim:} $U$ and $V$ are a separation of $H$ and $K$. \\
Assume there is $x \in U \cap V$, then $x \in U_n \cap V_n$ or $x \in U_K \cap V_K$, which is a contradiction. Thus, $U \cap V = \emptyset$. In addition, $H = (H \cap U_n) \cup (H \smallsetminus U_n) \subseteq U_n \cup U_K = U$ and $K = (\bigcap _{j < n}K_j) \cap K_n \subseteq V_K \cap V_n = V$. Hence, $H$ and $K$ are separated.\\
Therefore, for any closed set $H$ and for each $n$, if $K$ is $n$-$\pi$-closed and $H \cap K = \emptyset$, then $H$ and $K$ can be separated. Whence, $X$ is $\pi$-normal.
\end{proof}

\noindent Hence, the previous diagram is simplified as follows:
\begin{center}
\emph{almost-normal $\rightarrow$ quasi-normal $\rightarrow$ partly normal $\rightarrow$ mildly-normal}.
\end{center}

\noindent A family $\mathcal{A}$ of infinite subsets of $\omega$ is called an \emph{almost disjoint family} if and only if any two distinct members meet in a finite set (for each $a,b \in \mathcal{A}$, $a\neq b \rightarrow |a\cap b| < \omega$). All almost disjoint families considered here will be infinite.

\noindent Given an almost disjoint family $\mathcal{A}$, the Mr\'owka-Isbell $\Psi$-space $\Psi(\mathcal{A})$ is defined as follows: the underlying set is $\omega \cup \mathcal{A}$; if $n \in \omega$, $\{n\}$ is open and if $a\in \mathcal{A}$, then for any finite set $F \subset \omega$, $\{a\} \cup a \setminus F$ is a basic open set of $a$. $\Psi(\mathcal{A})$ is a separable, first countable, zero dimensional regular space. For a detailed survey on open problems and recent work on almost disjoint families and $\Psi$-spaces see \cite{HH}.

\begin{definition} Given an almost disjoint family $\mathcal{A}$,
\begin{itemize}[nosep]
\item If $B \subseteq \omega$, let $\mathcal{A}\upharpoonright _ B = \{a\in \mathcal{A}: |a \cap B| = \omega\}$.
\item $\mathcal{I}^+(\mathcal{A}) = \{B \subseteq \omega: |\mathcal{A}\upharpoonright _ B|\geq \omega\}$ is the family of big sets (the sets that have infinite intersection with infinite many members of the family).
\item $\mathcal{I}(\mathcal{A}) = \{B \subseteq \omega: |\mathcal{A}\upharpoonright _ B|< \omega\}$, the family of small sets. This family forms an ideal. 
\item $\mathcal{A}$ will be called \emph{completely separable \cite{GS}} if for each $B \in \mathcal{I}^+(\mathcal{A})$, there is some $a \in \mathcal{A}$ with $a \subseteq B$.
\item $\mathcal{A}$ will be called \emph{of true cardinality $\mathfrak{c}$ \cite{HS}} if for every $B \subseteq \omega$ either $\mathcal{A}\upharpoonright _ B $ is finite, or it has size $\mathfrak{c}$.
\item If $\Psi(\mathcal{A})$ is a normal space (almost-normal, quasi-normal, partly-normal, mildly-normal), it will be said that $\mathcal{A}$ is normal (almost-normal, quasi-normal, partly-normal, mildly-normal, respectively).
\end{itemize}
\end{definition}

\noindent The existence in ZFC of a completely separable mad family is an important open question that has many interesting consequences (see \cite{HS}). Completely separable almost disjoint (not maximal) families do exist in ZFC and also have interesting consequences (see \cite{GS}). It is not hard to show that if $\mathcal{A}$ is a completely separable almost disjoint family and $B \in \mathcal{I}^+(\mathcal{A})$, then $|\{a \in \mathcal{A}: a \subseteq {B}\}|= \mathfrak{c}$. This fact has the following consequence: if $\mathcal{A}$ is completely separable, then for any $B \subseteq \omega$, the set $\mathcal{A}\upharpoonright _ B$ is either finite or it has size $\mathfrak{c}$. That is, every completely separable almost disjoint family is of true cardinality $\mathfrak{c}$, and therefore, almost disjoint families of true cardinality $\mathfrak{c}$ exist in ZFC. Furthermore, every infinite almost disjoint family $\mathcal{A}$ of true cardinality $\mathfrak{c}$, has size $\mathfrak{c}$ and thefore $\mathcal{A}$ is not normal (as a consequence of Jones' Lemma). Actually, something slightly stronger holds:

\begin{observation}
\label{IfTrueCardcAandCtlbeComplementcannotseparate}
If $\mathcal{A}$ is an almost disjoint family of true cardinality $\mathfrak{c}$, then for all $\mathcal{C} \in [\mathcal{A}]^{\aleph _0}$, $\mathcal{C}$ and $\mathcal{A} \smallsetminus \mathcal{C}$ cannot be separated in $\Psi(\mathcal{A})$.
\end{observation}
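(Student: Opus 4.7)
The plan is to run a short Jones'-Lemma-style argument against the definition of true cardinality $\mathfrak{c}$. Suppose for contradiction that open sets $U,V\subseteq\Psi(\mathcal{A})$ witness a separation: $U\cap V=\emptyset$, $\mathcal{C}\subseteq U$, $\mathcal{A}\smallsetminus\mathcal{C}\subseteq V$. The only trace that matters is on the countable dense set $\omega$, so set $B:=U\cap\omega$. The goal is to show that $\mathcal{A}\upharpoonright_{B}=\mathcal{C}$; since $\mathcal{C}$ has size $\aleph_{0}$, this contradicts the hypothesis that $\mathcal{A}\upharpoonright_{B}$ is either finite or of size $\mathfrak{c}$.

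First, I show $\mathcal{C}\subseteq\mathcal{A}\upharpoonright_{B}$. For $a\in\mathcal{C}\subseteq U$, using that $U$ is open and that basic neighborhoods of $a$ in $\Psi(\mathcal{A})$ have the form $\{a\}\cup(a\smallsetminus F)$ for finite $F\subseteq\omega$, pick a finite $F_{a}$ with $\{a\}\cup(a\smallsetminus F_{a})\subseteq U$. Then $a\smallsetminus F_{a}\subseteq B$, so $|a\cap B|=\omega$, i.e.\ $a\in\mathcal{A}\upharpoonright_{B}$.

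Next, I show $(\mathcal{A}\smallsetminus\mathcal{C})\cap\mathcal{A}\upharpoonright_{B}=\emptyset$. For $a\in\mathcal{A}\smallsetminus\mathcal{C}\subseteq V$, pick a finite $G_{a}$ with $\{a\}\cup(a\smallsetminus G_{a})\subseteq V$. Since $U\cap V=\emptyset$ and $B\subseteq U$, we get $(a\smallsetminus G_{a})\cap B=\emptyset$, so $a\cap B\subseteq G_{a}$ is finite and $a\notin\mathcal{A}\upharpoonright_{B}$.

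Combining the two inclusions, $\mathcal{A}\upharpoonright_{B}=\mathcal{C}$, which has cardinality $\aleph_{0}$, contradicting the true cardinality $\mathfrak{c}$ assumption. There is no real obstacle here; the only thing to be careful about is to extract the separating witness from the right side of the open set, namely its trace on $\omega$, and to notice that the disjointness of $U$ and $V$ forces the points of $a\smallsetminus G_{a}$ (for $a\notin\mathcal{C}$) to avoid $B$ entirely rather than merely avoid some neighborhood of $\mathcal{C}$.
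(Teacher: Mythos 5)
Your proof is correct and follows essentially the same route as the paper: both take the trace $B=U\cap\omega$ of the open set covering $\mathcal{C}$, note that every $c\in\mathcal{C}$ is almost contained in $B$ so $\mathcal{A}\upharpoonright_B$ is infinite and hence of size $\mathfrak{c}$, and then use an element of $(\mathcal{A}\smallsetminus\mathcal{C})\upharpoonright_B$ to contradict disjointness. The only (immaterial) difference is that you phrase it as $\mathcal{A}\upharpoonright_B=\mathcal{C}$ having cardinality $\aleph_0$, while the paper directly exhibits a point of $U\cap V$.
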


\begin{proof}
Let $U$, $V$ be any open sets in $\Psi(\mathcal{A})$ so that $\mathcal{C} \subseteq U$, $\mathcal{A} \setminus \mathcal{C} \subseteq V$. Let $W = U \cap \omega$, then for all $c \in \mathcal{C}$, $c \subseteq ^* W$. Hence, $|\mathcal{A}\upharpoonright _ W|\geq \omega$. Thus,  $|\mathcal{A}\upharpoonright _ W|= \mathfrak{c}$. Pick $a \in \mathcal{A}\setminus \mathcal{C}$ such that $|W\cap a| = \omega$. Since $a \subseteq ^* V \cap \omega$, $U \cap V \neq \emptyset$.
\end{proof}

\noindent The following observations are not hard to show and they will be used in various occasions in the next section.

\begin{observation}
\label{closureofsubsetsofomega}
Given any almost disjoint family $\mathcal{A}$, if $W \subseteq \omega$, then  $cl_{\Psi(\mathcal{A})}(W)$ is a regular closed subset of $\Psi(\mathcal{A})$.
\end{observation}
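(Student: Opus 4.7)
The plan is to observe that $W$ is an open subset of $\Psi(\mathcal{A})$ and then invoke the standard fact that the closure of an open set is always regular closed. Concretely, by the definition of $\Psi(\mathcal{A})$, every $n \in \omega$ is isolated, so each singleton $\{n\}$ is open. Hence $W = \bigcup_{n \in W}\{n\}$ is open in $\Psi(\mathcal{A})$.

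Now I would argue the two inclusions for $cl(W) = cl(int(cl(W)))$ separately. Since $W$ is open, $W \subseteq int(cl(W))$, and taking closures gives $cl(W) \subseteq cl(int(cl(W)))$. For the reverse inclusion, $int(cl(W)) \subseteq cl(W)$ trivially, so taking closures and using idempotence of closure yields $cl(int(cl(W))) \subseteq cl(cl(W)) = cl(W)$. Combining the two inclusions gives $cl(W) = cl(int(cl(W)))$, i.e.\ $cl_{\Psi(\mathcal{A})}(W)$ is regular closed.

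There is no real obstacle here; the content of the statement is just the combination of two easy facts (points of $\omega$ are isolated in $\Psi(\mathcal{A})$, and the closure of any open set in any topological space is regular closed), and the proof is a couple of lines. The only thing worth being careful about is making explicit the step that $W$ is open, since that is what distinguishes arbitrary subsets of $\omega$ from, say, arbitrary subsets of $\Psi(\mathcal{A})$ (whose closures need not be regular closed).
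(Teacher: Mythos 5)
Your proof is correct and is exactly the intended argument: the paper states this observation without proof, and the standard justification is precisely that $W$ is open in $\Psi(\mathcal{A})$ (its points being isolated) together with the general fact that the closure of an open set is regular closed, which you verify correctly. Nothing is missing.
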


\begin{observation}
\label{regclosedset}
Given any almost disjoint family $\mathcal{A}$, if $H \subset \Psi(\mathcal{A})$ is a regular closed set,
then for each $a \in \mathcal{A}$, $a \in H$ if and only if $|a \cap H| = \omega$.
\end{observation}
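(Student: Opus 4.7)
The statement has two directions, and only one genuinely uses that $H$ is regular closed; the other uses only that $H$ is closed.

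For the ``if'' direction, suppose $|a\cap H|=\omega$. Every basic neighborhood of $a$ has the form $\{a\}\cup(a\setminus F)$ with $F\subset\omega$ finite, and since $a\cap H$ is infinite, each such neighborhood meets $H$ (in fact, infinitely often in $\omega$). Hence $a\in cl(H)=H$. Note that here we only needed $H$ to be closed.

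For the ``only if'' direction, let $a\in H$ and assume toward contradiction that $|a\cap H|<\omega$. Set $F:=a\cap H$, which is a finite subset of $\omega$, and consider the basic open neighborhood $U:=\{a\}\cup(a\setminus F)$ of $a$. By the choice of $F$, no point of $a\setminus F\subseteq\omega$ lies in $H$, so $U\cap H=\{a\}$. Now invoke regular closedness: $H=\overline{int(H)}$, and $a\in H$ means every neighborhood of $a$ meets $int(H)$, so in particular $U\cap int(H)\neq\emptyset$. Since $U\cap int(H)\subseteq U\cap H=\{a\}$, we conclude $a\in int(H)$. Then some basic neighborhood $\{a\}\cup(a\setminus F')\subseteq H$, which forces $a\setminus F'\subseteq a\cap H=F$, i.e.\ $a\subseteq F\cup F'$, contradicting that $a$ is infinite.

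There is really no obstacle here; the only thing worth highlighting is that regular closedness is essential in the ``only if'' direction (the singleton $\{a\}$ is a closed set containing $a$ for which $a\cap\{a\}=\emptyset$, so the implication fails for closed sets in general). The argument is clean because points of $\omega$ are isolated, so basic neighborhoods of points of $\mathcal{A}$ are easy to manipulate.
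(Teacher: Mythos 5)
Your proof is correct and is exactly the straightforward argument the paper has in mind (the paper omits the proof, declaring the observation ``not hard to show''): closedness gives the ``if'' direction, and for ``only if'' the identity $H=\overline{\mathrm{int}(H)}$ forces $a\in \mathrm{int}(H)$ once $U\cap H=\{a\}$, which is impossible since $a$ is infinite. Your side remark that $\{a\}$ witnesses the failure of the ``only if'' direction for sets that are merely closed is accurate and correctly isolates where regular closedness is used.
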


\begin{observation}
\label{closedsets}
Given any almost disjoint family $\mathcal{A}$ and $H, K \subset \Psi(\mathcal{A})$ such that $H$ and $K$ are closed sets, $H\cap K = \emptyset$ and $|H  \cap \mathcal{A}| < \omega$, then $H$ and $K$ can be separated. In particular, for each closed set $H \subset \Psi(\mathcal{A})$ that has finite intersection with $\mathcal{A}$, $H$ and $\mathcal{A} \smallsetminus H$ can be separated.
\end{observation}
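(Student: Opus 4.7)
The plan is to construct disjoint open sets $U \supseteq H$ and $V \supseteq K$ by choosing a basic neighborhood around each of the finitely many points of $H \cap \mathcal{A}$ and around every point of $K \cap \mathcal{A}$, exploiting the fact that each $b \in K \cap \mathcal{A}$ meets each of the finitely many $a_i \in H \cap \mathcal{A}$ in only a finite set.

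Write $H \cap \mathcal{A} = \{a_1, \dots, a_n\}$. Since $K$ is closed and each $a_i \notin K$, for every $i \le n$ I can pick a finite $F_i \subset \omega$ with $\bigl(\{a_i\} \cup (a_i \setminus F_i)\bigr) \cap K = \emptyset$, and then define
\[U = (H \cap \omega) \cup \bigcup_{i \le n} \bigl(\{a_i\} \cup (a_i \setminus F_i)\bigr).\]
This is open, since every subset of $\omega$ is open and each $\{a_i\} \cup (a_i \setminus F_i)$ is a basic open set, and it clearly contains $H$.

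For $V$, for each $b \in K \cap \mathcal{A}$ I pick a finite $G_b \subset \omega$ with $(b \setminus G_b) \cap H = \emptyset$ (possible since $H$ is closed and $b \notin H$), and enlarge $G_b$ to also contain $\bigcup_{i \le n}(b \cap a_i)$, a finite set by almost disjointness together with $|H \cap \mathcal{A}| < \omega$. Let
\[V = (K \cap \omega) \cup \bigcup_{b \in K \cap \mathcal{A}} \bigl(\{b\} \cup (b \setminus G_b)\bigr),\]
an open set containing $K$. The verification that $U \cap V = \emptyset$ is then routine: the $\omega$-parts $H \cap \omega$ and $K \cap \omega$ are disjoint since $H \cap K = \emptyset$; each $a_i \setminus F_i$ avoids $K$ by the choice of $F_i$, hence avoids $K \cap \omega$ and every $b \setminus G_b$; and each $b \setminus G_b$ avoids $H \cap \omega$ by the choice of $G_b$ and avoids each $a_i \setminus F_i$ because $b \cap a_i \subseteq G_b$.

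The only delicate point is the simultaneous choice of the $G_b$: one must strip from $b$ both its overlap with $H$ (handled by closedness of $H$) and its finite overlap with each of the finitely many $a_i$. Finiteness of $H \cap \mathcal{A}$ is exactly what makes $\bigcup_{i \le n}(b \cap a_i)$ finite, so this is the only place the hypothesis is used. For the ``in particular'' statement, note that $\mathcal{A} \setminus H$ is itself closed in $\Psi(\mathcal{A})$: no $x \in \omega$ can lie in its closure because the open singleton $\{x\}$ is disjoint from $\mathcal{A}$, and no $a \in \mathcal{A} \cap H$ can either because the set $(\mathcal{A} \setminus H) \cap \omega$ is empty and $a \notin \mathcal{A} \setminus H$. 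Thus the main assertion applied with $K = \mathcal{A} \setminus H$ yields the claimed separation.
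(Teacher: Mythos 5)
Your proof is correct; the paper states this observation without proof (it is among the facts declared ``not hard to show''), and your argument is the standard one the authors clearly intend: shrink basic neighborhoods of the finitely many points of $H\cap\mathcal{A}$ off of $K$, and shrink each $b\in K\cap\mathcal{A}$ off of $H$ and off the finite set $\bigcup_{i\le n}(b\cap a_i)$, which is where the hypothesis $|H\cap\mathcal{A}|<\omega$ enters. The ``in particular'' clause is also handled correctly, via the fact that $\mathcal{A}$ is closed discrete in $\Psi(\mathcal{A})$, so $\mathcal{A}\smallsetminus H$ is closed and disjoint from $H$.
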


\section{Examples}

\noindent Example \ref{QuasiNormalNotAlmostNormalPsiSpace} provides a quasi-normal not almost-normal almost disjoint family $\mathcal{F}$ which is constructed from a particular non almost-normal almost disjoint family  $\mathcal{A}$ of true cardinality $\mathfrak{c}$. Each element of $\mathcal{F}$ will be a finite union of elements of $\mathcal{A}$ . In order to make $\mathcal{F}$ quasi-normal, all pairs of disjoint $\pi$-closed sets in $\Psi(\mathcal{F})$ have to be separated. By Observation \ref{closedsets}, the only pairs of  $\pi$-closed sets $(A,B)$ that might be difficult to separate are the ones where $A  \cap \mathcal{F}$ and $B  \cap \mathcal{F}$ are infinite. Using that $\mathcal{A}$ is of true cardinality $\mathfrak{c}$ it will be possible to build $\mathcal{F}$ so that all such pairs have a point in common. Thus, all pairs of disjoint $\pi$-closed sets in $\Psi(\mathcal{F})$ will be trivial, i.e. one of them will have finite intersection with $\mathcal{F}$. Hence, $\mathcal{F}$ will be quasi-normal. In addition, it won't be hard to carry this construction out so that the non almost-normality of $\mathcal{A}$ is preserved in $\mathcal{F}$. That is, a closed set $\mathcal{C}$ and a regular closed set $E$ with empty intersection that cannot be separated in $\Psi(\mathcal{A})$ will be transformed into a pair of witnesses of non almost-normality in $\Psi(\mathcal{F})$. Now, let us obtain the required non almost-normal almost disjoint family of true cardinality $\mathfrak{c}$.\\

\noindent The following example is an instance of a machine for converting two almost disjoint families of the same cardinality, into a single almost disjoint family $\mathcal{A}$ with a countable set $\mathcal{C} \subset \mathcal{A}$ and a set $E \subset \Psi(\mathcal{A})$ such that $\mathcal{C}$ is closed and $E$ is regular closed in $\Psi(\mathcal{A})$, $\mathcal{C} \cap E = \emptyset$ and $\mathcal{A} \subset \mathcal{C} \cup E$.

\begin{example}
\label{TrueCountableC}
There is an almost disjoint family $\mathcal{A}$ of true cardinality $\mathfrak{c}$ on $\omega$ so that there is $\mathcal{C} \in [\mathcal{A}]^\omega$ and $W \in [\omega]^\omega$, such that $cl_{\Psi(\mathcal{A})}(W) \cap \mathcal{A} = \mathcal{A} \smallsetminus \mathcal{C}$. In particular, there is a non almost-normal almost disjoint family of true cardinality $\mathfrak{c}$.
\end{example}

\begin{proof}
Partition $\omega$ into two infinite disjoint sets $V,W$. Let $\mathcal{A}_0, \mathcal{A}_1$ be almost disjoint families of true cardinality $\mathfrak{c}$ on $V$ and $W$, respectively, and let $\mathcal{C} \in [\mathcal{A}_0]^\omega$. Now, a new family is built as follows, let $\alpha: \mathcal{A}_0 \smallsetminus \mathcal{C} \leftrightarrow \mathcal{A}_1$ be a bijective function. Let $\mathcal{A} = \{a \cup \alpha(a): a \in  \mathcal{A}_0 \smallsetminus \mathcal{C}\} \cup \mathcal{C}$.\\
Let us check that $\mathcal{A}$ is the desired family. Clearly, it is almost disjoint. To see that it has true cardinality $\mathfrak{c}$ let $M\subseteq \omega$ such that $|\mathcal{A}\upharpoonright _ M|\geq \omega$. Then, either $|\mathcal{C}\upharpoonright _ M|\geq \omega$ or  $|(\mathcal{A} \setminus \mathcal{C})\upharpoonright _ M|\geq \omega$. Hence, $|\mathcal{A}_0\upharpoonright _ M|\geq \omega$ or $|\mathcal{A}_1\upharpoonright _ M|\geq \omega$. Therefore, $|\mathcal{A}_0\upharpoonright _ M| = \mathfrak{c}$ or $|\mathcal{A}_1\upharpoonright _ M|= \mathfrak{c}$. In any case, $|\mathcal{A}\upharpoonright _ M|= \mathfrak{c}$. Thus, $\mathcal{A}$ is of true cardinality $\mathfrak{c}$.\\
Now, $a \in cl_{\Psi(\mathcal{A})}(W) \cap \mathcal{A} \leftrightarrow a \in \mathcal{A}$ $\wedge |a \cap W| = \omega \leftrightarrow a \in \mathcal{A} \wedge \big(\exists a_0 \in \mathcal{A}_0[a = a_0 \cup \alpha(a_0)]\big)$  $\leftrightarrow a \in \mathcal{A} \smallsetminus \mathcal{C}$.\
By Observation \ref{IfTrueCardcAandCtlbeComplementcannotseparate}, $\mathcal{A}$ is not almost-normal.
\end{proof}

\noindent If in the previous example we assume, in addition, that $\mathcal{A}_0, \mathcal{A}_1$ are mad families of the same cardinality, the resulting family $\mathcal{A}$ is mad as well: If $M \in [\omega]^\omega$, then $M$ has infinite intersection either with $V$ or with $W$, since $\mathcal{A}_0, \mathcal{A}_1$ are both mad, there is $a \in \mathcal{A}$ such $|a \cap M| = \omega$. Hence, the following holds:

\begin{corollary}
\label{TrueCountableCandmad}
The existence of a mad family of true cardinality $\mathfrak{c}$ implies the existence of a mad family $\mathcal{A}$ of true cardinality $\mathfrak{c}$ on $\omega$ so that there is $\mathcal{C} \in [\mathcal{A}]^\omega$ and $W \in [\omega]^\omega$, such that $cl_{\Psi(\mathcal{A})}(W) \cap \mathcal{A} = \mathcal{A} \smallsetminus \mathcal{C}$. In particular, the existence of a mad family of true cardinality $\mathfrak{c}$ implies the existence of a non almost-normal mad family of  true  cardinality $\mathfrak{c}$.
\end{corollary}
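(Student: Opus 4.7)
The plan is to reuse the construction of Example \ref{TrueCountableC} verbatim, but to feed it mad families on $V$ and $W$ rather than merely almost disjoint families of true cardinality $\mathfrak{c}$. First, invoking the hypothesis, I would fix a mad family of true cardinality $\mathfrak{c}$ on $\omega$ and, using any bijections $\omega \leftrightarrow V$ and $\omega \leftrightarrow W$, transport it to obtain mad families $\mathcal{A}_0$ on $V$ and $\mathcal{A}_1$ on $W$, each still of true cardinality $\mathfrak{c}$; both properties are manifestly preserved by a bijection of the base set. Both families have cardinality $\mathfrak{c}$, so one can fix $\mathcal{C} \in [\mathcal{A}_0]^\omega$ and a bijection $\alpha : \mathcal{A}_0 \smallsetminus \mathcal{C} \leftrightarrow \mathcal{A}_1$, and set $\mathcal{A} = \{a \cup \alpha(a) : a \in \mathcal{A}_0 \smallsetminus \mathcal{C}\} \cup \mathcal{C}$ exactly as before.

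Example \ref{TrueCountableC} delivers three of the required conclusions for free: $\mathcal{A}$ is almost disjoint, it has true cardinality $\mathfrak{c}$, and $cl_{\Psi(\mathcal{A})}(W) \cap \mathcal{A} = \mathcal{A} \smallsetminus \mathcal{C}$. The only new point to verify is maximality of $\mathcal{A}$: given $M \in [\omega]^\omega$, at least one of $M \cap V$, $M \cap W$ is infinite, and in the first case madness of $\mathcal{A}_0$ on $V$ yields $a_0 \in \mathcal{A}_0$ with $|a_0 \cap M| = \omega$, whence either $a_0 \in \mathcal{C} \subseteq \mathcal{A}$ or $a_0 \cup \alpha(a_0) \in \mathcal{A}$ meets $M$ in an infinite set; the second case is symmetric using $\mathcal{A}_1$.

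For the ``in particular'' clause, note that $\mathcal{C}$ is closed in $\Psi(\mathcal{A})$ (every $a \in \mathcal{A} \smallsetminus \mathcal{C}$ has a basic neighborhood isolating it from $\mathcal{A} \smallsetminus \{a\}$) and is disjoint from $cl_{\Psi(\mathcal{A})}(W) = W \cup (\mathcal{A} \smallsetminus \mathcal{C})$, which is regular closed by Observation \ref{closureofsubsetsofomega}. Observation \ref{IfTrueCardcAandCtlbeComplementcannotseparate} says the countable set $\mathcal{C}$ cannot be separated from $\mathcal{A} \smallsetminus \mathcal{C}$, and therefore cannot be separated from any superset, in particular from $cl_{\Psi(\mathcal{A})}(W)$, witnessing non almost-normality. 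There is no real obstacle here: the whole argument is a light wrapper around the previous example, and the only thing one has to watch is that transporting a mad family via a bijection of the base set preserves both maximality and true cardinality $\mathfrak{c}$.
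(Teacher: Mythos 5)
Your proposal is correct and follows essentially the same route as the paper: the paper likewise obtains the corollary by running the construction of Example \ref{TrueCountableC} with $\mathcal{A}_0$, $\mathcal{A}_1$ taken to be mad families of the same cardinality, and checks maximality by noting that any $M\in[\omega]^\omega$ meets $V$ or $W$ infinitely. Your extra details (transporting a single mad family of true cardinality $\mathfrak{c}$ by bijections, and the verification of the ``in particular'' clause via Observations \ref{closureofsubsetsofomega} and \ref{IfTrueCardcAandCtlbeComplementcannotseparate}) are all accurate.
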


\begin{example}
\label{QuasiNormalNotAlmostNormalPsiSpace}
There is a quasi-normal not almost-normal almost disjoint family of true cardinality $\mathfrak{c}$.
\end{example}

\begin{proof}
Let $\mathcal{A}$ be a not almost-normal almost disjoint family of true cardinality $\mathfrak{c}$ as in Example \ref{TrueCountableC}. Hence, let $\mathcal{C} \in [\mathcal{A}]^\omega$ and $W \in [\omega]^\omega$, with $|\omega \smallsetminus W| = \omega$, such that $cl_{\Psi(\mathcal{A})}(W) \cap \mathcal{A} = \mathcal{A} \smallsetminus \mathcal{C}$. Consider the family of finite subsets of $[\omega]^\omega$, $\mathcal{E} = \big[[\omega]^\omega\big]^{<\omega}$ and let $\mathcal{B} = \{\{ C,D\} \in [\mathcal{E}]^2: (\bigcap C) \cap (\bigcap D) = \emptyset\}$. Since $|\mathcal{B}| = \mathfrak{c}$, we can list it as $\mathcal{B} = \{\{ C_\alpha,D_\alpha \} : \alpha < \mathfrak{c}\}$. A sequence of finite sets $\mathcal{F}_\alpha \in [\mathcal{A}]^{<\omega}$ will be built recursively in $\mathfrak{c}$ many steps.\\

\noindent For $\alpha = 0$, consider $\{ C_0,D_0 \} \in \mathcal{B}$. If for each $C \in C_0$ and $D \in D_0$, $\mathcal{A}\upharpoonright _C$ and $\mathcal{A}\upharpoonright _D$ all have size $\mathfrak{c}$, then for each $C \in C_0$ and $D \in D_0$ pick $a_C, b_D \in \mathcal{A} \setminus \mathcal{C}$ such that $|a_C\cap C| = \omega = |b_D\cap D|$ and all the $a_C$'s and $b_D$'s are distinct ($|\{a_C, b_D: C \in C_0, D \in D_0\}| = |C_0|+ |D_0|$). Let $\mathcal{F}_0 = \{a_C, b_D: C \in C_0, D \in D_0\}$. If there is $C \in C_0$ (or $D \in D_0$) such that $\mathcal{A}\upharpoonright _C$ is finite ($\mathcal{A}\upharpoonright _D$ is finite), let $\mathcal{F}_0 = \emptyset$. Observe that these are the only two possibilities as $\mathcal{A}$ is of true cardinality $\mathfrak{c}$.\\

\noindent Now assume $0 < \alpha < \mathfrak{c}$ and that for each $\beta < \alpha$, $\mathcal{F}_\beta$ is either empty of a finite subset of $ \mathcal{A} \setminus (\mathcal{C} \cup \bigcup _{\gamma < \beta}\mathcal{F}_\gamma)$. Consider the pair $\{ C_\alpha,D_\alpha \}$. If for each $C \in C_\alpha$ and $D \in D_\alpha$, $\mathcal{A}\upharpoonright _C$ and $\mathcal{A}\upharpoonright _D$ all have size $\mathfrak{c}$, then for each $C \in C_\alpha$ and $D \in D_\alpha$ pick $a_C, b_D \in \mathcal{A} \setminus  (\mathcal{C} \cup \bigcup _{\beta < \alpha}\mathcal{F}_\beta)$ such that $|a_C\cap C| = \omega = |b_D\cap D|$ and all the $a_C$'s and $b_D$'s are distinct ($|\{a_C, b_D: C \in C_\alpha, D \in D_\alpha\}| = |C_\alpha|+ |D_\alpha|$).  Let $\mathcal{F}_{\alpha} = \{a_C, b_D: C \in C_\alpha, D \in D_\alpha\}$. If there is $C \in C_\alpha$ (or $D \in D_\alpha$) such that $\mathcal{A}\upharpoonright _C$ is finite ($\mathcal{A}\upharpoonright _D$ is finite), let $\mathcal{F}_{\alpha} = \emptyset$. Let
$$\mathcal{F} = \big \{\bigcup \mathcal{F}_\alpha:\alpha < \mathfrak{c} \big \} \cup \big(\mathcal{A} \setminus \bigcup_{\alpha < \mathfrak{c}} \mathcal{F}_\alpha \big).$$

\noindent Since each $a \in \mathcal{F}$ is either an element of $\mathcal{A}$ or a finite union of elements of $\mathcal{A}$, it is clear that $\mathcal{F}$ is an almost disjoint family of true cardinality $\mathfrak{c}$.\\

\noindent {\bf Claim:} $\Psi(\mathcal{F})$ is quasi-normal.
\noindent Let $A \neq \emptyset \neq B$ be disjoint $\pi$-closed subsets of $\Psi(\mathcal{F})$. $A = \bigcap_{i=1}^nA_i$, $B = \bigcap_{j=1}^mB_j$, where each $A_i$ and $B_j$ are regular closed sets. It can be assumed that for each $i \leq n$ and for each $j \leq m$, $|A_i \cap \omega| = \omega = |B_j \cap \omega|$. Let $\alpha < \mathfrak{c}$ be minimal such that $C_\alpha = \{A_i \cap \omega:i\leq n\}$ and $D_\alpha = \{B_j \cap \omega:j\leq m\}$.\\
At stage $\alpha$, either $\mathcal{F}_{\alpha} = \emptyset$ or $\mathcal{F}_{\alpha} =  \{a_C, b_D: C \in C_\alpha, D \in D_\alpha\}$. The latter is not possible since for each $C \in C_\alpha$ and each $D \in D_\alpha$ the $a_C$'s and $b_D$'s  were chosen so that $|a_C \cap C| = \omega = |b_D \cap D|$ and this implies $\bigcup \mathcal{F}_{\alpha}$ is in the closure of each $C \in C_\alpha$ and each $D \in D_\alpha$ (see Observation \ref{closureofsubsetsofomega} and Observation \ref{regclosedset}). Hence $\bigcup \mathcal{F}_{\alpha} \in A \cap B$, but it is assumed that $A$ and $B$ are disjoint.\\

\noindent Thus,  $\mathcal{F}_{\alpha} = \emptyset$. This means that there exists $C \in C_\alpha$, such that $\mathcal{A}\upharpoonright _C = H$ for some finite set $H$ (or there exists $D \in D_\alpha$, such that $\mathcal{A}\upharpoonright _D = H$ for some finite set $H$). Without loss of generality assume there exists such $C \in C_\alpha$. Hence, $\mathcal{A} \upharpoonright _C = H_0$ for some finite set $H_0$. Observe that since for each $a \in \mathcal{F}$, either $a \in \mathcal{A}$ or $a$ is a finite union of elements of $\mathcal{A}$, then $\mathcal{F} \upharpoonright _C = H_1$ for some finite $H_1$ so that $|H_1| \leq |H_0|$. Now fix $i \leq n$ such that $A_i \cap \omega = C$. Since $A_i$ is regular closed, by \ref{regclosedset} $A_i \cap \mathcal{F} = H_0$. Thus, $A \cap \mathcal{F} \subseteq H_0$ and by Observation \ref{closedsets}, $A$ and $B$ can be separated. Therefore $\Psi(\mathcal{F})$ is quasi-normal.\\

\noindent {\bf Claim:} $\Psi(\mathcal{F})$ is not almost-normal.\\
Fix $a \in \mathcal{F} \smallsetminus \mathcal{C}$, then $a \in \mathcal{A} \smallsetminus \mathcal{C}$ or $a$ is a finite union of elements of $\mathcal{A} \smallsetminus \mathcal{C}$. Since $cl_{\Psi(\mathcal{A})}(W) \cap \mathcal{A} = \mathcal{A} \smallsetminus \mathcal{C}$, $|W \cap a| = \omega$. Hence, $a \in  cl_{\Psi(\mathcal{F})}(W)$, i.e., $\mathcal{F} \smallsetminus \mathcal{C} \subseteq cl_{\Psi(\mathcal{F})}(W)$. On the other hand, if $c \in \mathcal{C}$, $c \notin  cl_{\Psi(\mathcal{A})}(W)$, thus $|c \cap W| < \omega$ and therefore $c \notin  cl_{\Psi(\mathcal{F})}(W)$. \\
Hence, $\mathcal{C}$ is a closed set, $cl_{\Psi(\mathcal{F})}(W)$ is a regular closed set, they do not intersect and by Observation \ref{IfTrueCardcAandCtlbeComplementcannotseparate} they cannot be separated.
\end{proof}

\noindent If in the construction of Example \ref{QuasiNormalNotAlmostNormalPsiSpace}, a mad family as in Corollary \ref{TrueCountableCandmad} is chosen, then the resulting family $\mathcal{F}$ is mad, quasi-normal and not almost-normal. Thus:

\begin{corollary}
\label{TruemadQuasiNotAlmNor}
The existence of a mad family of true cardinality $\mathfrak{c}$ implies the existence of a quasi-normal, non almost-normal mad family of true cardinality $\mathfrak{c}$.
\end{corollary}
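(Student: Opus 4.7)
The plan is to repeat the construction of Example \ref{QuasiNormalNotAlmostNormalPsiSpace} verbatim, but starting from a mad family supplied by Corollary \ref{TrueCountableCandmad} rather than from a merely almost disjoint family. Since the hypothesis is exactly the existence of a mad family of true cardinality $\mathfrak{c}$, Corollary \ref{TrueCountableCandmad} hands us a mad family $\mathcal{A}$ of true cardinality $\mathfrak{c}$ together with $\mathcal{C} \in [\mathcal{A}]^\omega$ and $W \in [\omega]^\omega$ (we may also arrange $|\omega \smallsetminus W|=\omega$) so that $cl_{\Psi(\mathcal{A})}(W) \cap \mathcal{A} = \mathcal{A} \smallsetminus \mathcal{C}$. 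This is precisely the starting data assumed in Example \ref{QuasiNormalNotAlmostNormalPsiSpace}.

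Next, I would run the recursive construction of Example \ref{QuasiNormalNotAlmostNormalPsiSpace} unchanged on this $\mathcal{A}$, producing the family
\[
\mathcal{F} \;=\; \Big\{\bigcup \mathcal{F}_\alpha : \alpha < \mathfrak{c}\Big\} \,\cup\, \Big(\mathcal{A} \smallsetminus \bigcup_{\alpha < \mathfrak{c}} \mathcal{F}_\alpha\Big).
\]
The arguments in Example \ref{QuasiNormalNotAlmostNormalPsiSpace} show, word for word, that $\mathcal{F}$ is an almost disjoint family of true cardinality $\mathfrak{c}$, that $\Psi(\mathcal{F})$ is quasi-normal, and (using the witness $\mathcal{C}$ together with $cl_{\Psi(\mathcal{F})}(W)$) that $\Psi(\mathcal{F})$ is not almost-normal. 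None of these verifications uses maximality of $\mathcal{A}$, so nothing needs to be changed.

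The only genuinely new point is maximality of $\mathcal{F}$, and this is where I would focus the write-up. The key observation is structural: every $a \in \mathcal{A}$ is contained in some member of $\mathcal{F}$. Indeed, if $a$ is never picked in any $\mathcal{F}_\alpha$ then $a \in \mathcal{F}$ directly; otherwise $a$ belongs to a unique $\mathcal{F}_\alpha$ and hence $a \subseteq \bigcup \mathcal{F}_\alpha \in \mathcal{F}$. Consequently, for any $M \in [\omega]^\omega$, maximality of $\mathcal{A}$ yields some $a \in \mathcal{A}$ with $|a \cap M| = \omega$, and the $f \in \mathcal{F}$ containing $a$ then also satisfies $|f \cap M| = \omega$. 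Hence $\mathcal{F}$ is mad.

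I do not expect any real obstacle: the construction was designed so that its output is a union of $\mathfrak{c}$ finite unions of elements of $\mathcal{A}$ together with the leftover elements of $\mathcal{A}$, and the inclusion $a \subseteq f$ for some $f \in \mathcal{F}$ transfers maximality for free. The slight bookkeeping issue, to double-check, is simply that the construction never discards any $a \in \mathcal{A}$ without absorbing it into some $\bigcup \mathcal{F}_\alpha$, which is immediate from the definition of $\mathcal{F}$.
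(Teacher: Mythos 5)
Your proposal is correct and matches the paper's approach exactly: the paper proves this corollary by the one-line remark that running the construction of Example \ref{QuasiNormalNotAlmostNormalPsiSpace} on the mad family from Corollary \ref{TrueCountableCandmad} yields a mad $\mathcal{F}$. Your explicit verification of maximality (every $a \in \mathcal{A}$ is contained in some member of $\mathcal{F}$, so madness transfers) is the detail the paper leaves implicit, and it is sound since the $\mathcal{F}_\alpha$ are pairwise disjoint and the leftover elements of $\mathcal{A}$ are retained.
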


\noindent The following example provides a mildly-normal not partly-normal almost disjoint family $\mathcal{F}$ of true cardinality $\mathfrak{c}$ which is constructed using three almost disjoint families of true cardinality $\mathfrak{c}$. In order to make $\mathcal{F}$ mildly-normal all pairs of disjoint regular closed sets in $\Psi(\mathcal{F})$ have to be separated. A similar approach as in Example \ref{QuasiNormalNotAlmostNormalPsiSpace} is followed. It will be  possible to build $\mathcal{F}$ so that all pairs of disjoint regular closed sets in $\Psi(\mathcal{F})$ will be trivial, i.e., one of them will have finite intersection with $\mathcal{F}$ (Observation \ref{closedsets} guarantees they can be separated). To make $\mathcal{F}$ not quasi-normal, there will be a regular closed set $A$ disjoint from a $\pi$-closed set $B$ that cannot be separated. The basic idea is to partition $\omega$ into three infinite sets, $W$, $V_0$, $V_1$, take an almost disjoint family of true cardinality $\mathfrak{c}$ on each one of them (we use the property of true cardinality $\mathfrak{c}$ to make $\mathcal{F}$ mildly-normal), and build $\mathcal{F}$ so that in $\Psi(\mathcal{F})$,  $A = cl_{\Psi(\mathcal{F})}(W)$ and 
$B = cl_{\Psi(\mathcal{F})}(V_0) \cap cl_{\Psi(\mathcal{F})}(V_1)$ are disjoint but cannot be separated.

\begin{example}
\label{mildlynotpartlyNormal}
There exists a mildly-normal not partly-normal almost disjoint family of true cardinality $\mathfrak{c}$.
\end{example}

\begin{proof}
Partition $\omega$ into three disjoint infinite pieces, that is $W, V_0, V_1 \in [\omega]^{\omega}$ and $W \cup V_0 \cup V_1 = \omega$.  If $Y \in \{W, V_0, V_1\}$ let $\mathcal{A}_Y$ be an almost disjoint family of true cardinality $\mathfrak{c}$ on $Y$. List all pairs of infinite subsets of $\omega$ with empty intersection as $\{ \{ C_\alpha , D_\alpha \}: \alpha < \mathfrak{c} \}$. A sequence of finite sets $\mathcal{F}_\alpha \subset \mathcal{A}_{W} \cup \mathcal{A}_{V_0} \cup \mathcal{A}_{V_1}$ will be built recursively in $\mathfrak{c}$ many steps.\\

\noindent Fix $\alpha < \mathfrak{c}$, assume that for each $\beta < \alpha$, $\mathcal{F}_\beta$ has been defined such that $\mathcal{F}_\beta$ is a possibly empty finite set $\mathcal{F}_\beta \subset (\mathcal{A}_{W} \cup \mathcal{A}_{V_0} \cup \mathcal{A}_{V_1}) \setminus \bigcup _{\gamma < \beta}\mathcal{F}_\gamma$ such that either $\mathcal{F}_\beta \subset \mathcal{A}_{W}$ or $\mathcal{F}_\beta$ has nonempty intersection with exactly two elements of $\{\mathcal{A}_{W}, \mathcal{A}_{V_0}, \mathcal{A}_{V_1}\}$. Consider $\{C_\alpha, D_\alpha\}$.\\

\noindent {\bf Case 1:} Either all three sets $\mathcal{A}_{W}\upharpoonright _{C_\alpha}$, $\mathcal{A}_{V_0}\upharpoonright _{C_\alpha}$, $\mathcal{A}_{V_1}\upharpoonright _{C_\alpha}$ are finite, or all three sets $\mathcal{A}_{W}\upharpoonright _{D_\alpha}$, $\mathcal{A}_{V_0}\upharpoonright _{D_\alpha}$, $\mathcal{A}_{V_1}\upharpoonright _{D_\alpha}$ are finite. In this case, let $\mathcal{F}_{\alpha} = \emptyset$.\\

\noindent {\bf Case 2:} Case 1 is false. That is (given that $\mathcal{A}_{W}$, $\mathcal{A}_{V_0}$, $\mathcal{A}_{V_1}$ are of true cardinality $\mathfrak{c}$): at least one of the three sets $\mathcal{A}_{W}\upharpoonright _{C_\alpha}$, $\mathcal{A}_{V_0}\upharpoonright _{C_\alpha}$, $\mathcal{A}_{V_1}\upharpoonright _{C_\alpha}$ has size $\mathfrak{c}$ and at least one of the three sets $\mathcal{A}_{W}\upharpoonright _{D_\alpha}$, $\mathcal{A}_{V_0}\upharpoonright _{D_\alpha}$, $\mathcal{A}_{V_1}\upharpoonright _{D_\alpha}$ has size $\mathfrak{c}$. Choose the smallest $i$ such that Subcase $2.i$ (below) holds, define $\mathcal{F}_{\alpha}$ accordingly, and ignore the other subcases.\\
\noindent {\bf Subcase 2.1:} $|\mathcal{A}_{W}\upharpoonright _{C_\alpha}| = \mathfrak{c}=|\mathcal{A}_{W}\upharpoonright _{D_\alpha}|$. Pick $c_\alpha, d_\alpha \in \mathcal{A}_{W}\setminus \bigcup _{\beta < \alpha}\mathcal{F}_\beta$ such that $c_\alpha \neq d_\alpha$ and $|c_\alpha \cap C_\alpha| = \omega = |d_\alpha \cap D_\alpha|$. Let $\mathcal{F}_{\alpha} = \{c_\alpha, d_\alpha\}$.\\
\noindent {\bf Subcase 2.2:} There exists $i \in \{0,1\}$ so that $|\mathcal{A}_{V_i}\upharpoonright _{C_\alpha}| = \mathfrak{c} = |\mathcal{A}_{V_i}\upharpoonright _{D_\alpha}|$. Pick $c_\alpha, d_\alpha \in \mathcal{A}_{V_i}\setminus \bigcup _{\beta < \alpha}\mathcal{F}_\beta$, such that $c_\alpha \neq d_\alpha$ and $|c_\alpha \cap C_\alpha| = \omega = |d_\alpha \cap D_\alpha|$. In addition, pick $e_\alpha \in \mathcal{A}_{V_{1-i}}\setminus\bigcup _{\beta < \alpha}\mathcal{F}_\beta$. Let $\mathcal{F}_{\alpha}  = \{c_\alpha, d_\alpha, e_\alpha\}$.\\
\noindent {\bf Subcase 2.3:}  $|\mathcal{A}_{V_0}\upharpoonright _{C_\alpha}| = \mathfrak{c}=|\mathcal{A}_{V_1}\upharpoonright _{D_\alpha}|$. Pick $c_\alpha \in \mathcal{A}_{V_0}\setminus \bigcup _{\beta < \alpha}\mathcal{F}_\beta$ and $d_\alpha \in \mathcal{A}_{V_1}\setminus \bigcup _{\beta < \alpha}\mathcal{F}_\beta$ such that $|c_\alpha \cap C_\alpha| = \omega = |d_\alpha \cap D_\alpha|$  and let $\mathcal{F}_{\alpha} = \{c_\alpha, d_\alpha\}$.\\
\noindent {\bf Subcase 2.4:} $|\mathcal{A}_W\upharpoonright _{C_\alpha}| = \mathfrak{c}$ and there exists $i \in \{0,1\}$ so that $|\mathcal{A}_{V_i}\upharpoonright _{D_\alpha}|= \mathfrak{c}$. Pick $c_\alpha \in \mathcal{A}_W\setminus \bigcup _{\beta < \alpha}\mathcal{F}_\beta$ and $d_\alpha \in \mathcal{A}_{V_i}\setminus \bigcup _{\beta < \alpha}\mathcal{F}_\beta$ such that $|c_\alpha \cap C_\alpha| = \omega = |d_\alpha \cap D_\alpha|$ and let $\mathcal{F}_{\alpha} = \{c_\alpha, d_\alpha\}$.\\

\noindent This finishes Case 2 and the construction of $\mathcal{F}_\alpha$ for $\alpha < \mathfrak{c}$. Let
$$\mathcal{F} = \big \{\bigcup \mathcal{F}_\alpha:\alpha < \mathfrak{c} \big \} \cup \big((\mathcal{A}_{W} \cup \mathcal{A}_{V_0} \cup \mathcal{A}_{V_1}) \setminus \bigcup_{\alpha < \mathfrak{c}} \mathcal{F}_\alpha \big).$$

\noindent It will be shown that $\mathcal{F}$ is the desired almost disjoint family. Given that each of $\mathcal{A}_{W}$, $\mathcal{A}_{V_0}$ and $\mathcal{A}_{V_1}$ is of true cardinality $\mathfrak{c}$ and if we let $a \in \mathcal{F}$, then either $a$ is an element or a finite union of elements of $\mathcal{A}_{W} \cup \mathcal{A}_{V_0} \cup \mathcal{A}_{V_1}$, then $\mathcal{F}$ is an almost disjoint family of true cardinality $\mathfrak{c}$.\\

\noindent {\bf $\Psi(\mathcal{F})$ is not partly-normal:}\\
\noindent Let $A = cl_{\Psi(\mathcal{F})}(W)$ and 
$B = cl_{\Psi(\mathcal{F})}(V_0) \cap cl_{\Psi(\mathcal{F})}(V_1)$.
By Observation \ref{closureofsubsetsofomega}, $A$ is regular closed and $B$ is a $\pi$-closed set. Observe that since $\mathcal{A}_{V_0}$ and  $\mathcal{A}_{V_1}$ are of true cardinality $\mathfrak{c}$, there are infinite many pairs $\{C_\alpha,D_\alpha\}$ such that $C_\alpha \subset V_0$, $D_\alpha \subset V_1$, and $|\mathcal{A}_{V_0} \upharpoonright _{C_\alpha}| = \mathfrak{c} = |\mathcal{A}_{V_1} \upharpoonright _{D_\alpha}|$. For such pairs Subcase 2.3 applies and therefore $|B \cap \mathcal{F}| \ge \omega$. In addition, $A \cap B = \emptyset$: assume there is $a \in A \cap B$. Since $V_0 \cap V_1 = \emptyset$, $B \cap \omega = \emptyset$, hence $a \in \mathcal{F} \cap A \cap B$. By Observation \ref{regclosedset}, $|a \cap W|= |a \cap V_0|= |a \cap V_1| = \omega$. This implies that $a \notin \mathcal{A}_W \cup \mathcal{A}_{V_0}  \cup\mathcal{A}_{V_1}$. There is $\alpha < \mathfrak{c}$ such that $a = \bigcup \mathcal{F}_{\alpha}$, but by the construction, $\mathcal{F}_{\alpha} \subset  \mathcal{A}_W$ or $\mathcal{F}_{\alpha}$ intersects exactly two elements of $\{\mathcal{A}_W, \mathcal{A}_{V_0},\mathcal{A}_{V_1}\}$ which contradicts that $a$ has infinite intersection with $W$, $V_0$ and $V_1$. Whence, $A\cap B = \emptyset$.\\
\noindent It remains to show that $A$ and $B$ cannot be separated. Assume, on the contrary, that there are $S, T \subseteq \Psi(\mathcal{F})$ open such that $A \subseteq S$, $B \subseteq T$ and $S \cap T = \emptyset$. Let $\alpha < \mathfrak{c}$ such that $C_\alpha = \omega \cap S$ and $D_\alpha = \omega \cap T$. For the pair $\{C_\alpha,D_\alpha\}$, either Case 1 or Case 2 of the construction holds.\\

\noindent {\it If Case 1 holds:} since $W \subseteq C_\alpha$, $\mathcal{A}_{W}\upharpoonright _{C_\alpha}$ is not finite. Hence, $\mathcal{A}_{W}\upharpoonright _{D_\alpha}$, $\mathcal{A}_{V_0}\upharpoonright _{D_\alpha}$, $\mathcal{A}_{V_1}\upharpoonright _{D_\alpha}$ are finite. Thus, $\mathcal{F}\upharpoonright _{D_\alpha}$ is finite. Since $cl_{\Psi(\mathcal{F})}(D_\alpha)$ is regular closed and $\mathcal{F}\upharpoonright _{D_\alpha}$ is finite, by Observation \ref{regclosedset}, $\mathcal{F} \cap cl_{\Psi(\mathcal{F})}(D_\alpha)$ is finite. Now, $T$ is open and $D_\alpha = \omega \cap T$, therefore $T \subseteq cl_{\Psi(\mathcal{F})}(D_\alpha)$. Hence, $\mathcal{F} \cap T$ is finite. Given that $|B \cap \mathcal{F}| \ge \omega$, $B \not \subseteq T$, which is a contradiction.\\
\noindent {\it If Case 2 holds:} Either $\mathcal{F}_{\alpha} \subset \mathcal{A}_W$ or $\mathcal{F}_{\alpha}$ intersects exactly two elements of $\{\mathcal{A}_W, \mathcal{A}_{V_0},\mathcal{A}_{V_1}\}$. In any case $\bigcup \mathcal{F}_{\alpha}$ is an element of $A$ or $B$. In addition, there exist $c_\alpha, d_\alpha \in \mathcal{F}_{\alpha}$ such that $|c_\alpha \cap C_\alpha| = \omega = |d_\alpha \cap D_\alpha|$. If $\bigcup \mathcal{F}_{\alpha} \in A$, then for each open neighbourhood $U$ of $\bigcup \mathcal{F}_{\alpha}$, $U \cap T \neq \emptyset$ (which implies $U \not \subseteq S$), and this contradicts that $S$ is open. We reach a similar contradiction if $\bigcup \mathcal{F}_{\alpha} \in B$. Hence, $A$ and $B$ cannot be separated.\\

\noindent {\bf $\Psi(\mathcal{F})$ is mildly-normal:}\\
\noindent Let $C \neq \emptyset \neq D$ be disjoint regular closed subsets of $\Psi(\mathcal{F})$. It can be assumed that $|C \cap \omega| = \omega = |D \cap \omega|$. Fix $\alpha < \mathfrak{c}$ such that $C \cap \omega = C_\alpha$ and $D \cap \omega = D_\alpha$. For the pair $\{C_\alpha,D_\alpha\}$, either Case 1 or Case 2 holds. If Case 2 holds, there exist $c_\alpha, d_\alpha \in \mathcal{F}_{\alpha}$ such that $|c_\alpha \cap C_\alpha| = \omega = |d_\alpha \cap D_\alpha|$. Thus, $\bigcup \mathcal{F}_{\alpha} \in cl_{\Psi(\mathcal{F})}(C_\alpha) \cap cl_{\Psi(\mathcal{F})}(D_\alpha)$ $\subseteq cl_{\Psi(\mathcal{F})}(C) \cap cl_{\Psi(\mathcal{F})}(D) = C \cap D$. This contradicts $C \cap D = \emptyset$.\\
Thus, Case 1 holds. This means that all three sets $\mathcal{A}_{W}\upharpoonright _{C_\alpha}$, $\mathcal{A}_{V_0}\upharpoonright _{C_\alpha}$, $\mathcal{A}_{V_1}\upharpoonright _{C_\alpha}$ are finite, or all three sets $\mathcal{A}_{W}\upharpoonright _{D_\alpha}$, $\mathcal{A}_{V_0}\upharpoonright _{D_\alpha}$, $\mathcal{A}_{V_1}\upharpoonright _{D_\alpha}$ are finite.\\
Without loss of generality, assume the former. This implies that $\mathcal{F}\upharpoonright _{C_\alpha}$ is finite. Given that $C$ is a regular closed set and $C_\alpha = C\cap \omega$, by Observation \ref{regclosedset} $C \cap \mathcal{F}$ is finite and by Observation \ref{closedsets}, $C$ and $D$ can be separated. Therefore $\Psi(\mathcal{F})$ is mildly-normal.
\end{proof}

\noindent Observe that if in the construction of Example \ref{mildlynotpartlyNormal}, the families  $\mathcal{A}_{W}$, $\mathcal{A}_{V_0}$ and $\mathcal{A}_{V_1}$ are mad of true cardinality $\mathfrak{c}$, then the family $\mathcal{F}$ is mad as well. Therefore:

\begin{corollary}
\label{TruemadMildlyNotPartly}
If there exists a mad family of true cardinality $\mathfrak{c}$, then there is a mildly-normal, not partly-normal mad family of true cardinality $\mathfrak{c}$.
\end{corollary}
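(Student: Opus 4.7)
The plan is to run the construction of Example \ref{mildlynotpartlyNormal} with mad ingredients and then verify that nothing in the argument breaks and that maximality is inherited by $\mathcal{F}$.

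First, starting from a mad family of true cardinality $\mathfrak{c}$ on $\omega$, I would produce mad families $\mathcal{A}_W$, $\mathcal{A}_{V_0}$, $\mathcal{A}_{V_1}$ of true cardinality $\mathfrak{c}$ living on each piece of a partition $\omega = W \sqcup V_0 \sqcup V_1$ into infinite sets. Since $W$, $V_0$, $V_1$ are countably infinite, any mad family of true cardinality $\mathfrak{c}$ on $\omega$ can be transported by a bijection to yield the required mad families on each piece (the property of being mad and of being of true cardinality $\mathfrak{c}$ are both preserved under bijective relabelings of the underlying set). Then I would perform the recursive construction of $\mathcal{F}$ exactly as in Example \ref{mildlynotpartlyNormal}.

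The verification that $\Psi(\mathcal{F})$ is mildly-normal and not partly-normal, as well as the fact that $\mathcal{F}$ is almost disjoint of true cardinality $\mathfrak{c}$, are all already carried out in the proof of Example \ref{mildlynotpartlyNormal} and use nothing about maximality; so I can quote them directly. The only new content is maximality of $\mathcal{F}$.

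For maximality, let $M \in [\omega]^\omega$. Then at least one of $|M \cap W|$, $|M \cap V_0|$, $|M \cap V_1|$ is infinite; fix $Y \in \{W,V_0,V_1\}$ such that $|M \cap Y| = \omega$. Since $\mathcal{A}_Y$ is mad on $Y$, there exists $a \in \mathcal{A}_Y$ with $|a \cap (M \cap Y)| = \omega$, hence $|a \cap M| = \omega$. Now either $a \notin \bigcup_{\alpha < \mathfrak{c}} \mathcal{F}_\alpha$, in which case $a \in \mathcal{F}$ and $a$ itself witnesses that $M$ meets an element of $\mathcal{F}$ infinitely; or $a \in \mathcal{F}_\alpha$ for some (unique) $\alpha < \mathfrak{c}$, in which case $a \subseteq \bigcup \mathcal{F}_\alpha \in \mathcal{F}$ and $\bigcup \mathcal{F}_\alpha$ has infinite intersection with $M$. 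Either way, $M$ is not almost disjoint from $\mathcal{F}$, so $\mathcal{F}$ is mad.

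The main (and in fact the only) obstacle is the bookkeeping in the last paragraph: one must be sure that when an element $a$ of some $\mathcal{A}_Y$ is absorbed into a union $\bigcup \mathcal{F}_\alpha$, the union still serves as a witness for maximality. This is immediate because each $a \in \mathcal{F}_\alpha$ is a subset of $\bigcup \mathcal{F}_\alpha$, so any infinite intersection with $a$ is preserved. With this observation the corollary follows without requiring any modification of the construction in Example \ref{mildlynotpartlyNormal}.
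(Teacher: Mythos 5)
Your proposal is correct and follows the same route as the paper: the paper likewise just observes that running the construction of Example \ref{mildlynotpartlyNormal} with mad families $\mathcal{A}_W$, $\mathcal{A}_{V_0}$, $\mathcal{A}_{V_1}$ on the three pieces yields a mad $\mathcal{F}$, the mildly-normal/not-partly-normal verification being unaffected by maximality. Your extra remark that an $a$ absorbed into $\bigcup\mathcal{F}_\alpha$ still witnesses maximality is a detail the paper leaves implicit, but it is the right thing to check.
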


\begin{definition}
\label{nPartlyNormal}
For a positive $n\in \omega$, a regular space will be called \emph{$n$-partly-normal} if any two nonintersecting sets $A$ and $B$, where $A$ is regularly closed and $B$ is the intersection of at most $n$ regularly closed sets, are separated.
\end{definition}

\noindent Observe that $1$-partly-normal coincides with mildly-normal, and for each positive $n\in \omega$, partly-normal $\rightarrow$ $(n+1)$-partly-normal $\rightarrow$ $n$-partly-normal $\rightarrow$ mildly-normal. It is possible to extend the idea in Example \ref{mildlynotpartlyNormal} (partition $\omega$ into $n+2$ pairwise disjoint infinite pieces, take an almost disjoint family of true cardinality $\mathfrak{c}$ on each piece and let $\{\mathbb{C}_\alpha:\alpha < \mathfrak{c}\}$ list all sets $\mathbb{C} \subset [\omega]^\omega$ such that $2 \le |\mathbb{C}| \le n+1$), to show the following:

\begin{theorem}
\label{npartlyNormalnotn+1}
For each positive $n\in \omega$, there exists a $n$-partly-normal not $(n+1)$-partly-normal almost disjoint family of true cardinality $\mathfrak{c}$.
\end{theorem}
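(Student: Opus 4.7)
The plan is to adapt the construction of Example \ref{mildlynotpartlyNormal} by partitioning $\omega$ into $n+2$ disjoint infinite pieces $W_0, W_1, \ldots, W_{n+1}$ and fixing on each $W_i$ an almost disjoint family $\mathcal{A}_i$ of true cardinality $\mathfrak{c}$. The witnesses of failure of $(n+1)$-partly-normality will be $A = cl_{\Psi(\mathcal{F})}(W_0)$ (regular closed by Observation \ref{closureofsubsetsofomega}) and $B = \bigcap_{i=1}^{n+1} cl_{\Psi(\mathcal{F})}(W_i)$ (which is $(n+1)$-$\pi$-closed). As suggested, enumerate as $\{\mathbb{C}_\alpha : \alpha < \mathfrak{c}\}$ all $\mathbb{C} \subseteq [\omega]^\omega$ with $2 \le |\mathbb{C}| \le n+1$; the upper bound $n+1$ is exactly what is needed for the $n$-partly-normal verification, since a regular closed set together with an $n$-$\pi$-closed set involves at most $1+n$ regular closed components.

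At stage $\alpha$, write $\mathbb{C}_\alpha = \{X_1, \ldots, X_k\}$. If some $X_j$ satisfies $|\mathcal{A}_i \upharpoonright_{X_j}| < \omega$ for every $i$, declare Case 1 and set $\mathcal{F}_\alpha = \emptyset$; otherwise (Case 2), for each $X_j$ the true-cardinality-$\mathfrak{c}$ hypothesis provides some $\mathcal{A}_i$ with $|\mathcal{A}_i \upharpoonright_{X_j}| = \mathfrak{c}$ from which a fresh $c_j$ with $|c_j \cap X_j| = \omega$ can be picked (avoiding $\bigcup_{\beta<\alpha}\mathcal{F}_\beta$). Following the spirit of the subcases in Example \ref{mildlynotpartlyNormal}, the choice in Case 2 is arranged so as to force $\bigcup \mathcal{F}_\alpha \in A \cup B$: if some $X_j$ has $|\mathcal{A}_0 \upharpoonright_{X_j}| = \mathfrak{c}$, then pick such a $c_j \in \mathcal{A}_0$ and pick $c_{j'}$ arbitrarily (from any $\mathcal{A}_i$ that works) for the remaining $j'$, giving $\bigcup \mathcal{F}_\alpha \in A$; otherwise each $c_j$ must come from some $\mathcal{A}_i$ with $i \ge 1$, in which case pad $\mathcal{F}_\alpha$ with one fresh element from each $\mathcal{A}_i$ ($i=1,\ldots,n+1$) not yet represented, giving $\bigcup \mathcal{F}_\alpha \in B$. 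In either case $\mathcal{F}_\alpha$ uses at most $n+1$ of the $n+2$ families. Finally set $\mathcal{F} = \{\bigcup \mathcal{F}_\alpha : \alpha < \mathfrak{c}\} \cup \bigl(\bigcup_i \mathcal{A}_i \setminus \bigcup_\alpha \mathcal{F}_\alpha\bigr)$.

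The verifications then follow the template of Example \ref{mildlynotpartlyNormal}. $\mathcal{F}$ is almost disjoint of true cardinality $\mathfrak{c}$; $A \cap B = \emptyset$ because each $\bigcup \mathcal{F}_\alpha$ has finite (often empty) intersection with at least one $W_i$; and $|B \cap \mathcal{F}| \ge \omega$ because the $\mathfrak{c}$-many stages with $\mathbb{C}_\alpha = \{Z_1, \ldots, Z_{n+1}\}$ for $Z_i \subseteq W_i$ with $|\mathcal{A}_i \upharpoonright_{Z_i}| = \mathfrak{c}$ all trigger the second subcase of Case 2 and contribute distinct elements to $B \cap \mathcal{F}$. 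For non-separability of $A$ and $B$, a putative separation $(S,T)$ produces $\mathbb{C}_\alpha = \{S\cap\omega, T\cap\omega\}$ at some $\alpha$: Case 1 is impossible because $W_0 \subseteq S \cap \omega$ forces $|\mathcal{A}_0 \upharpoonright_{S\cap\omega}| = \mathfrak{c}$, while $B \subseteq T \subseteq cl_{\Psi(\mathcal{F})}(T\cap\omega)$ combined with $|B \cap \mathcal{F}| \ge \omega$ forces infinitely many elements of $\mathcal{F}$ into $T$; Case 2 produces $\bigcup \mathcal{F}_\alpha \in A \cup B$ which also lies in $cl_{\Psi(\mathcal{F})}(S\cap\omega) \cap cl_{\Psi(\mathcal{F})}(T\cap\omega)$, so every neighborhood of it meets both $S$ and $T$, contradicting $S \cap T = \emptyset$. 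For $n$-partly-normality, given disjoint regular closed $A'$ and $n$-$\pi$-closed $B' = \bigcap_{j \le k} cl_{\Psi(\mathcal{F})}(Y_j)$ with $k \le n$, the set $\{A' \cap \omega, Y_1, \ldots, Y_k\}$ (with duplicates removed) matches some $\mathbb{C}_\alpha$ of size in $[2,n+1]$; Case 2 would manufacture a point in $A' \cap B'$, so Case 1 must hold and some trace has finite intersection with $\mathcal{F}$, letting Observation \ref{closedsets} separate $A'$ from $B'$. The main obstacle is organizing the Case 2 subcases so that $\bigcup \mathcal{F}_\alpha$ reliably lands in $A \cup B$; once that bookkeeping is in place, the $n$-partly-normal and non-$(n+1)$-partly-normal arguments run in parallel with Example \ref{mildlynotpartlyNormal}.
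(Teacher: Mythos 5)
Your proposal is correct and is exactly the extension the paper intends: the paper gives no detailed proof of Theorem \ref{npartlyNormalnotn+1}, only the parenthetical hint about partitioning $\omega$ into $n+2$ pieces and enumerating the collections $\mathbb{C}_\alpha$ with $2\le|\mathbb{C}_\alpha|\le n+1$, and your two Case-2 subcases (one element from $\mathcal{A}_0$ versus padding to represent all of $\mathcal{A}_1,\dots,\mathcal{A}_{n+1}$) correctly generalize Subcases 2.1--2.4 of Example \ref{mildlynotpartlyNormal} so that $\bigcup\mathcal{F}_\alpha$ always lands in $A\cup B$ while missing at least one $W_i$. The counting that makes this work ($|\mathbb{C}_\alpha|\le n+1$ elements cannot represent all $n+2$ families, and cannot represent $\mathcal{A}_0$ together with all of $\mathcal{A}_1,\dots,\mathcal{A}_{n+1}$) is the right key point, and the separation arguments then run verbatim as in Example \ref{mildlynotpartlyNormal}.
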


\noindent Similarly as Corollary \ref{TruemadMildlyNotPartly}, it also holds true:

\begin{corollary}
\label{TruemadNPartlyNotn+1}
If there exists a mad family of true cardinality $\mathfrak{c}$, then for each positive $n\in \omega$, there is a $n$-partly-normal not $(n+1)$-partly-normal mad family of true cardinality $\mathfrak{c}$.
\end{corollary}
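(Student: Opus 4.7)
The plan is to run the same construction sketched for Theorem \ref{npartlyNormalnotn+1}, but starting from mad families on each piece of a partition, exactly in the spirit of the passage from Corollary \ref{TruemadMildlyNotPartly} to Example \ref{mildlynotpartlyNormal}. First, assuming the existence of a mad family of true cardinality $\mathfrak{c}$, observe that for any partition of $\omega$ into finitely many infinite pieces $Y_0,\dots,Y_{n+1}$ one can find a mad family of true cardinality $\mathfrak{c}$ on each $Y_i$: fix a bijection $Y_i \leftrightarrow \omega$ and transport the hypothesized family; madness and true cardinality $\mathfrak{c}$ are preserved under such bijections. Pick such $\mathcal{A}_{Y_i}$ on each piece.

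Next, carry out the construction outlined just before Theorem \ref{npartlyNormalnotn+1}: list as $\{\mathbb{C}_\alpha:\alpha<\mathfrak{c}\}$ all finite collections $\mathbb{C}\subset[\omega]^\omega$ with $2\le|\mathbb{C}|\le n+1$ (together with a distinguished second collection $\mathbb{D}_\alpha$, mirroring the pair $\{C_\alpha,D_\alpha\}$ in Example \ref{mildlynotpartlyNormal}), and recursively build finite sets $\mathcal{F}_\alpha$ from $\bigcup_i\mathcal{A}_{Y_i}\setminus\bigcup_{\beta<\alpha}\mathcal{F}_\beta$, according to the case analysis that forces either trivial separation (when all $\mathcal{A}_{Y_i}\!\upharpoonright\!$ restrictions on one side are finite) or a common point $\bigcup\mathcal{F}_\alpha$ in the two $\pi$-closed sets under consideration. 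Define
$$\mathcal{F} = \Big\{\bigcup\mathcal{F}_\alpha : \alpha<\mathfrak{c}\Big\} \cup \Big(\bigcup_{i\le n+1}\mathcal{A}_{Y_i} \setminus \bigcup_{\alpha<\mathfrak{c}}\mathcal{F}_\alpha\Big).$$
The verifications that $\mathcal{F}$ is an almost disjoint family of true cardinality $\mathfrak{c}$, that $\Psi(\mathcal{F})$ is $n$-partly-normal, and that $\Psi(\mathcal{F})$ is not $(n+1)$-partly-normal are exactly those promised by Theorem \ref{npartlyNormalnotn+1}, and can be invoked without change, since replacing the $\mathcal{A}_{Y_i}$'s by mad ones does not affect the true-cardinality-$\mathfrak{c}$ bookkeeping on which those arguments rely.

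The only new point is maximality of $\mathcal{F}$. Given $M\in[\omega]^\omega$, some piece $Y_i$ satisfies $|M\cap Y_i|=\omega$. Since $\mathcal{A}_{Y_i}$ is mad on $Y_i$, pick $a\in\mathcal{A}_{Y_i}$ with $|a\cap M|=\omega$. Either $a$ was never used in the recursion, in which case $a\in\mathcal{F}$; or $a\in\mathcal{F}_\alpha$ for some (unique) $\alpha$, in which case $\bigcup\mathcal{F}_\alpha\in\mathcal{F}$ and still has infinite intersection with $M$. Either way there is $b\in\mathcal{F}$ with $|b\cap M|=\omega$, so $\mathcal{F}$ is mad.

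The only step that requires thought is the case analysis for $n$-partly-normality and failure of $(n+1)$-partly-normality at general $n$; this is the extension of the four subcases of Example \ref{mildlynotpartlyNormal} to the setting with $n+2$ pieces, and will be the main obstacle inside the proof of Theorem \ref{npartlyNormalnotn+1} itself. Here, however, that obstacle is already absorbed: we simply reuse Theorem \ref{npartlyNormalnotn+1}'s argument and add the one-paragraph maximality check above.
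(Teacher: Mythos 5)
Your proposal is correct and follows exactly the route the paper intends: the paper proves this corollary only by the remark ``Similarly as Corollary \ref{TruemadMildlyNotPartly}'', i.e.\ by replacing the almost disjoint families on the $n+2$ pieces with mad ones of true cardinality $\mathfrak{c}$ and noting that the normality arguments of Theorem \ref{npartlyNormalnotn+1} are unaffected. Your added maximality check is precisely the argument the paper gives before Corollary \ref{TrueCountableCandmad}, so nothing is missing.
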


\noindent Corollary \ref{TruemadQuasiNotAlmNor} says, in particular, that there is a quasi-normal mad family, provided there is a completely separable mad family. Our next example shows that, assuming {\bf CH}, not only a quasi-normal mad family exists, but one that it is also Luzin. Recall that an almost disjoint family is \emph{Luzin} if it can be enumerated as $\{a_\alpha:\alpha <\omega_1\}$ so that for each $\alpha < \omega_1$ and each $n \in \omega$, $\{\beta < \alpha : a_\alpha \cap a_\beta \subseteq n \}$ is finite. Luzin introduced this kind almost disjoint family in \cite{Lu} to provide an example of an almost disjoint family $\mathcal{A}$ such that every pair of uncountable subfamilies of $\mathcal{A}$ have no \emph{separation} (it will be said that two subfamiles $\mathcal{B}$ and $\mathcal{C}$ of $\mathcal{A}$, have a separation if there is $X \subseteq \omega$  such that for each $b \in \mathcal{B}$, $b \subseteq ^* X$ and for each $c \in \mathcal{C}$, $c \cap X =^* \emptyset$). Thus, Luzin families are far from being normal. No mad family is normal, no Luzin family is normal, and yet, there is, consistently, a quasi-normal Luzin mad family. 

\begin{example}[CH]
\label{MadLuzinQuasi}
There is a Luzin mad family $\mathcal{A}$ which is quasi-normal.
\end{example}

\begin{proof}
The standard construction of a Luzin family is modified to build a family $\mathcal{A}$ with the extra following property: for each $X \subseteq \omega$, either $X$ is covered by finitely many elements of $\mathcal{A}$ or the set of elements of $\mathcal{A}$ that has finite intersection with $X$ is countable.\\
The idea is to use {\bf CH} to list all infinite subsets $X_\alpha \subseteq \omega$, with $\alpha < \omega_1$ and, at stage $\alpha < \omega_1$ of the construction of the family, $X_\alpha$ will be covered by the $\alpha$-th element of the family, together with finitely many elements of the family previously constructed or, if $X_\alpha$ has infinite intersection with infinitely many elements of the family constructed so far, it will be guaranteed that, from that stage until the end, all elements of the family will have infinite intersection with $X_\alpha$.\\
Partition $\omega$ into infinite pairwise disjoint subsets $a_i$, with $i \in \omega$, that is $\omega = \bigcup _{i\in\omega}a_i$, and $i \neq j $ implies $a_i \cap a_j = \emptyset$. List all infinite subsets of $\omega$ as $[\omega]^\omega = \{X_\alpha:\alpha < \omega_1\}$ such that for each $n \in \omega$, $X_n = a_n$. If $\alpha$ is such that $\omega \leq \alpha < \omega_1$, recursively assume we have constructed $a_\beta$ for $\beta < \alpha$ such that $\{a_\beta: \beta < \alpha\}$ is an almost disjoint family and for each  $\beta < \alpha$, $X_\beta$ is covered by finitely many elements of $\{a_\gamma: \gamma \le \beta\}$ or for each $\beta \le \gamma < \alpha$, $|X_\beta \cap a_\gamma| = \omega$.\\
The $\alpha$-th element of the family will be constructed. Reenumerate the sets $\mathcal{A}_\alpha = \{a_\beta : \beta < \alpha\}$ and $J_\alpha = \{X_\beta : \beta \leq \alpha\}$ as $\mathcal{A}_\alpha = \{a^\alpha_n : n \in \omega\}$ and $J_\alpha = \{X^\alpha_n : n \in \omega\}$. Let $I_\alpha = \{n \in \omega: X_n^\alpha \in \mathcal{I}^+(\mathcal{A_\alpha})\}$.\\
There are two cases, either $X_\alpha \in \mathcal{I}^+(\mathcal{A_\alpha})$ or $X_\alpha \notin \mathcal{I}^+(\mathcal{A_\alpha})$. We will construct $a_\alpha$ depending on whether at this stage, $I_\alpha$ is still empty or not.\\
If $I_\alpha = \emptyset$ (observe that in particular $X_\alpha \notin \mathcal{I}^+(\mathcal{A_\alpha})$), let $p_n^\alpha \subseteq a_n^\alpha \setminus \bigcup _{i < n}a_i^\alpha$, such that $|p_n^\alpha| = n$ and let $a_\alpha = \bigcup _{n \in \omega} p_n^\alpha \cup (X_\alpha \setminus\bigcup (\mathcal{A}_\alpha \upharpoonright _{X_\alpha}))$.\\
If $I_\alpha \neq \emptyset$. Let $\{Y_n:n\in \omega\}$ list all $X_n^\alpha$ such that $n \in I_\alpha$ and so that not only each $X_n^\alpha$ appears infinitely often but for each $n \in I_\alpha$ and for each $m \in \omega$, there is some $s \geq m$ such that $Y_s = X_n^\alpha$ and $|a_s^\alpha \cap Y_s| = \omega$. For $n \in \omega$, if $|a_n^\alpha \cap Y_n| < \omega$, let $p_n^\alpha \subseteq a_n^\alpha \setminus \bigcup _{i < n}a_i^\alpha$, such that $|p_n^\alpha| = n$. If $|a_n^\alpha \cap Y_n| = \omega$, let $p_n^\alpha \subseteq (a_n^\alpha \setminus \bigcup _{i < n}a_i^\alpha) \cap Y_n$ such that $|p_n^\alpha|=n$. Let $a_\alpha = \bigcup_{n \in \omega}p_n^\alpha \cup (X_\alpha \setminus\bigcup (\mathcal{A}_\alpha \upharpoonright _{X_\alpha}))$. Observe that  if $X_\alpha \notin \mathcal{I}^+(\mathcal{A_\alpha})$, then the construction of $a_\alpha$ guarantees that $X_\alpha$ is covered by finitely many elements of $\mathcal{A}_\alpha  \cup \{a_\alpha\}$. On the other hand, if $X_\alpha \in \mathcal{I}^+(\mathcal{A_\alpha})$, then $X_\alpha$ appears infinitely often in $\{Y_n:n\in \omega\}$, thus, it has infinite intersection with $a_\alpha$ and it will have infinite intersection with each $a_\beta$ for each $\beta > \alpha$.

\noindent Finally, let $\mathcal{A} = \{a_\alpha:\alpha < \omega_1\}$. The construction guarantees that $\mathcal{A}$ is Luzin: let $\alpha \in \omega_1$ and $n \in \omega$. Recall that $\mathcal{A}_\alpha = \{a_\beta : \beta < \alpha\} = \{a_m^\alpha: m \in \omega\}$ and for each $m \ge n$, $p_m^\alpha \subseteq a_\alpha \cap a_m^\alpha$ and $|p_m^\alpha| = m \ge n$. Hence, $\{\beta < \alpha: a_\beta\cap a_\alpha \subseteq n\}$ is finite. Let us verify that it is mad. Let $\alpha, \beta \in \omega_1$ such that $\beta < \alpha$. There is $n \in \omega$ with $a_\beta = a_n^\alpha$. Observe that for $i\leq n$, $p_i ^\alpha$ is finite, for $i >n$, $p_i^\alpha \cap a_\beta = \emptyset$ and, $ (X_\alpha \setminus\bigcup (\mathcal{A}_\alpha \upharpoonright _{X_\alpha})) \cap a_\beta$ is finite. Hence, $a_\beta \cap a_\alpha$ is finite. Now, let $X \in [\omega]^\omega$ and $\alpha < \omega_1$ such that $X = X_\alpha$. Either $X \notin \mathcal{I}^+(\mathcal{A}_\alpha)$, in which case $X$ is covered by finitely many elements of $\mathcal{A}_\alpha \cup \{a_\alpha\}$ (i.e. $X$ has infinite intersection with some element of $\mathcal{A}$), or $X \in \mathcal{I}^+(\mathcal{A}_\alpha)$, in which case for each $\gamma > \alpha$, $|X \cap a_\gamma| = \omega$. Thus, $\mathcal{A}$ is mad and it has the desired property. \\

\noindent Let us show that $\mathcal{A}$ is quasi-normal. Let $A, B \subseteq \Psi(\mathcal{A})$ such that $A$ and $B$ are $\pi$-closed sets and $A \cap B = \emptyset$. Thus, $A = \bigcap _{i<n}A_i$, $B = \bigcap _{j<m}B_j$, where each $A_i$, $B_j$ are regular closed subsets of $\Psi(\mathcal{A})$ for $i<n$ and $j<m$. Assume that for each $i<n$ and for each $j<m$, $|A_i \cap \mathcal{A}| \ge \omega$ and $|B_j \cap \mathcal{A}| \ge \omega$. Hence, for each $i<n$ and for each $j<m$, $A_i \cap \omega \in \mathcal{I}^+(\mathcal{A})$ and $B_j \cap \omega \in \mathcal{I}^+(\mathcal{A})$. By the construction of $\mathcal{A}$, for each $i<n$ and for each $j<m$ the sets $\{a \in \mathcal{A}:|a \cap (A_i \cap \omega)|< \omega \}$ and $\{a \in \mathcal{A}:|a \cap (B_j \cap \omega)|< \omega \}$ are countable. Since the $A_i$'s, $B_j$'s are regular closed sets, then for each $i<n$ and for each $j<m$, $|\mathcal{A} \setminus A_i| \leq \omega$ and $|\mathcal{A} \setminus B_j | \leq \omega$. Thus, $|\mathcal{A} \setminus \bigcap_{i<n}A_i| \leq \omega$ and $|\mathcal{A} \setminus \bigcap_{j<m} B_j | \leq \omega$. Therefore, $A \cap B \neq \emptyset$. Hence, there exists some $i<n$ (or $j<m$), such that $|A_i \cap \mathcal{A}| < \omega$ ($|B_j \cap \mathcal{A}| < \omega$). Then $|A \cap \mathcal{A}| < \omega$ ($|B \cap \mathcal{A}| < \omega$) and, by Observation \ref{closedsets}, $A$ can be separated from $B$.
\end{proof}

\section{Strongly $\aleph_0$-separated almost disjoint families}

It is still open whether there could be (e.g., assuming CH) a mad family whose $\Psi$-space is almost normal, or one whose $\Psi$-space is almost normal but not normal.  However, we can construct a mad family with a slightly weaker property: 

\begin{definition}
\label{stronglycountableseparated}
An almost disjoint family $\mathcal{A}$ will be called \emph{strongly $\aleph_0$-separated}, if and only if for each pair of disjoint countable subfamilies there is a clopen partition of $\mathcal{A}$ that separates them. That is, for each $A,B \in [\mathcal{A}]^\omega$, with  $A\cap B = \emptyset$, there is $X \subset \omega$ such that 
\begin{enumerate}[nosep]
\item For each $a \in \mathcal{A}$, $a \subseteq ^* X$ or $a \cap  X =^* \emptyset$,
\item For each $a \in A$, $a \subseteq ^* X$,
\item For each $a \in B$, $a \cap  X =^* \emptyset$.
\end{enumerate}
\end{definition}

\begin{lemma}
\label{quasiseparation}
Almost-normal almost disjoint families are strongly $\aleph_0$-separated.
\end{lemma}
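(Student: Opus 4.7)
The plan is to obtain $X$ by a single application of almost-normality, separating a canonical regular closed witness for $A$ against an appropriate closed set containing $B$. Enumerate $A = \{a_n : n < \omega\}$ and $B = \{b_n : n < \omega\}$ and put $W = \bigcup_{n < \omega}\big(a_n \setminus \bigcup_{k \le n} b_k\big)$. A routine computation, of the type already used at the beginning of Example \ref{QuasiNormalNotAlmostNormalPsiSpace}, gives $a_n \subseteq^{*} W$ for every $n$ and $b_m \cap W$ finite for every $m$: whenever $n \ge m$ the set $a_n \cap b_m$ is subtracted since $b_m \subseteq \bigcup_{k \le n} b_k$, and only the finitely many indices $n < m$ contribute, each through the finite set $a_n \cap b_m$. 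Hence, by Observations \ref{closureofsubsetsofomega} and \ref{regclosedset}, $cl_{\Psi(\mathcal{A})}(W)$ is a regular closed subset of $\Psi(\mathcal{A})$ with $A \subseteq cl_{\Psi(\mathcal{A})}(W)$ and $cl_{\Psi(\mathcal{A})}(W) \cap B = \emptyset$.

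The key observation is that the complement $\mathcal{A} \setminus cl_{\Psi(\mathcal{A})}(W)$ is itself closed in $\Psi(\mathcal{A})$. This uses only that $\mathcal{A}$ is closed discrete in $\Psi(\mathcal{A})$: the basic neighbourhood $\{a\} \cup a$ of any $a \in \mathcal{A}$ meets $\mathcal{A}$ only at $a$, so for any $S \subseteq \mathcal{A}$ and any $a \in \mathcal{A} \setminus S$ that neighbourhood misses $S$; together with the isolation of points of $\omega$, this shows every subset of $\mathcal{A}$ is closed in $\Psi(\mathcal{A})$. Applying almost-normality to the disjoint pair consisting of the regular closed set $cl_{\Psi(\mathcal{A})}(W)$ and the closed set $\mathcal{A} \setminus cl_{\Psi(\mathcal{A})}(W)$, fix disjoint open sets $U \supseteq cl_{\Psi(\mathcal{A})}(W)$ and $V \supseteq \mathcal{A} \setminus cl_{\Psi(\mathcal{A})}(W)$, and set $X = U \cap \omega$.

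The three clauses of Definition \ref{stronglycountableseparated} now drop out for this $X$. For an arbitrary $a \in \mathcal{A}$, either $a \in cl_{\Psi(\mathcal{A})}(W) \subseteq U$, in which case a basic neighbourhood $\{a\} \cup (a \setminus F) \subseteq U$ forces $a \subseteq^{*} X$, or $a \in \mathcal{A} \setminus cl_{\Psi(\mathcal{A})}(W) \subseteq V$, in which case an analogous neighbourhood is contained in $V$ and therefore disjoint from $X = U \cap \omega$, yielding $a \cap X =^{*} \emptyset$; this is clause (1), and clauses (2) and (3) are the special cases $a \in A$ and $a \in B$. The only non-mechanical step is recognising that $\mathcal{A} \setminus cl_{\Psi(\mathcal{A})}(W)$ is closed; once that is in hand, almost-normality completes the argument in one stroke, with no iteration or transfinite bookkeeping.
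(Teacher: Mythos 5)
Your proof is correct and follows essentially the same route as the paper's: both arguments produce a set $W \subseteq \omega$ with $A \subseteq cl_{\Psi(\mathcal{A})}(W)$ and $B \cap cl_{\Psi(\mathcal{A})}(W) = \emptyset$, then apply almost-normality once to the regular closed set $cl_{\Psi(\mathcal{A})}(W)$ and the closed set $\mathcal{A} \smallsetminus cl_{\Psi(\mathcal{A})}(W)$, and take $X = U \cap \omega$. The only (cosmetic) difference is that you build $W$ explicitly by the diagonal subtraction $\bigcup_n \bigl(a_n \setminus \bigcup_{k \le n} b_k\bigr)$, whereas the paper obtains it as $U_A \cap \omega$ from a preliminary separation of the countable closed sets $A$ and $B$ using regularity.
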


\begin{proof}
Let $\mathcal{A}$ be an almost-normal almost disjoint family. First, let us recall that each pair of disjoint countable closed subsets of a regular space can be separated. Hence, given that $\Psi(\mathcal{A})$ is regular and $\mathcal{A}$ is a closed discrete subset of $\Psi(\mathcal{A})$, if we consider $A, B \in [\mathcal{A}]^\omega$ so that $A \cap B = \emptyset$, then $A$ and $B$  can be separated. Thus, there exist $U_A, U_B$ open subsets of  $\mathcal{A}$ such that $U_A \cap U_B = \emptyset$ and $A \subseteq U_A$, $B \subseteq U_B$. Let $C = cl_{\Psi(\mathcal{A})}(U_A \cap \omega)$. By Observation \ref{closureofsubsetsofomega}, $C$ is a regular closed set. Then $C$ and $\mathcal{A} \setminus C$ is a pair of a regular closed set and a closed set with empty intersection. Since $\mathcal{A}$ is almost-normal, there exist $V$, $W$ open subsets of $\Psi(\mathcal{A})$ such that $V \cap W = \emptyset$ and $C \subseteq V$, $\mathcal{A} \setminus C \subseteq W$.\\
Let us check that $X = V \cap \omega$ has the desired properties. Indeed, let $a \in \mathcal{A}$, if $a \in C$, then $a \subseteq ^* V \cap \omega = X$. If $a \in \mathcal{A}\setminus C$, then $a\subseteq ^* W \cap \omega$, thus $a \cap X =^* \emptyset$. Now, if $a \in A$, $a \subseteq ^* U_A \cap \omega \subseteq C \cap \omega \subseteq V \cap \omega = X$. If $b \in B$, $|b \cap U_A| < \omega$ thus, $b \in \mathcal{A} \setminus C$. Hence, $b \subseteq ^* W$, i.e. $b \cap X =^* \emptyset$. Hence, $\mathcal{A}$ is strongly $\aleph_0$-separated.
\end{proof}

\begin{proposition}[CH]
\label{CHimpliesMADstronglycountableseparated}
There is a strongly $\aleph_0$-separated mad family.
\end{proposition}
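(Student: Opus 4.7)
\noindent The plan is to adapt the transfinite recursion from the proof of Example \ref{MadLuzinQuasi}: under \textbf{CH}, I would simultaneously construct $\mathcal{A}=\{a_\alpha:\alpha<\omega_1\}$ and an auxiliary sequence $\{X_\alpha:\alpha<\omega_1\}\subseteq\mathcal{P}(\omega)$ of candidate separators. Using \textbf{CH}, fix enumerations $[\omega]^\omega=\{Y_\alpha:\alpha<\omega_1\}$ and, by standard bookkeeping, $\{(A_\alpha,B_\alpha):\alpha<\omega_1\}$ listing all pairs of disjoint countable subsets of $\omega_1$, with $A_\alpha,B_\alpha\subseteq\alpha$ and every such pair appearing. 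The invariants maintained at stage $\alpha$ are (I) $\{a_\beta:\beta<\alpha\}$ is almost disjoint, (II) for every $\beta,\gamma<\alpha$ either $a_\gamma\subseteq^* X_\beta$ or $a_\gamma\cap X_\beta=^*\emptyset$, and (III) $X_\beta$ separates $A_\beta$ from $B_\beta$ in the sense of Definition \ref{stronglycountableseparated}(2)--(3).

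\noindent At stage $\alpha$ I would first construct $a_\alpha$ by a filter argument. Enumerate priors $\{a_\beta:\beta<\alpha\}=\{b_n:n<\omega\}$ and $\{X_\beta:\beta<\alpha\}=\{W_n:n<\omega\}$, and set $F_\alpha=\{n:|b_n\cap Y_\alpha|=\omega\}$. Let $\mathcal{F}_0=\{\omega\setminus b_n:n<\omega\}$, adjoining $Y_\alpha\setminus\bigcup_{n\in F_\alpha}b_n$ if this set is infinite (so that the madness witness for $Y_\alpha$ is built in); in the remaining case, $F_\alpha$ already witnesses that $Y_\alpha$ meets $\mathcal{A}_\alpha$ maximally and no action is needed. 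Recursively extend $\mathcal{F}_n$ to $\mathcal{F}_{n+1}=\mathcal{F}_n\cup\{W_n^{\epsilon_n}\}$ by choosing $\epsilon_n\in\{0,1\}$ preserving the finite intersection property. Such a choice always exists: otherwise finite $F,G\subseteq\mathcal{F}_n$ would yield $(\bigcap F)\cap W_n$ and $(\bigcap G)\cap(\omega\setminus W_n)$ both finite, forcing $\bigcap(F\cup G)\subseteq^*\emptyset$ and contradicting the inductive FIP. The resulting countable filter has FIP, so since $\mathfrak{p}\geq\aleph_1$ (a consequence of \textbf{CH}) it admits an infinite pseudo-intersection, which I take as $a_\alpha$; by construction $a_\alpha$ is almost disjoint from each $b_n$, is decided by each $W_n$, and (when applicable) meets $Y_\alpha$ infinitely.

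\noindent For $X_\alpha$, re-enumerate $\{a_\gamma:\gamma\leq\alpha\}=\{b_n:n<\omega\}$, set $\eta_n=1$ if the corresponding $a_\gamma$ has $\gamma\in A_\alpha$ and $\eta_n=0$ otherwise, and define $X_\alpha=\bigcup_{n:\eta_n=1}(b_n\setminus\bigcup_{m<n}b_m)$. A short almost-disjointness computation shows $b_n\subseteq^* X_\alpha$ when $\eta_n=1$ and $|b_n\cap X_\alpha|<\omega$ when $\eta_n=0$, so $X_\alpha$ decides every $a_\gamma$ with $\gamma\leq\alpha$ and in particular separates $A_\alpha$ from $B_\alpha$.

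\noindent After the recursion, $\mathcal{A}$ is almost disjoint by (I), and $\mathcal{A}$ is mad because every $Y\in[\omega]^\omega$ equals some $Y_\alpha$ and is witnessed either by $F_\alpha$ at stage $\alpha$ or by $a_\alpha$ itself. For strong $\aleph_0$-separation, given disjoint $A,B\in[\mathcal{A}]^\omega$, locate $\alpha$ with $(A,B)=(A_\alpha,B_\alpha)$; then $X_\alpha$ decides every $a_\gamma$ with $\gamma\leq\alpha$ by the previous paragraph and every $a_\gamma$ with $\gamma>\alpha$ by invariant (II) applied at stage $\gamma$, so $X_\alpha$ is the required clopen partition. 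The main technical hurdle I expect is keeping the initial FIP of $\mathcal{F}_0$: the sets $\{\omega\setminus b_n\}$ have FIP only when no finite union of priors is cofinite in $\omega$, and I plan to enforce this by a parallel diagonalization when selecting each pseudo-intersection, ensuring that each $a_\alpha$ omits infinitely many elements of $\omega\setminus\bigcup_{n\in G}b_n$ for every finite $G$.
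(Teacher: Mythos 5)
Your argument is correct and is essentially the paper's own proof: the same \textbf{CH} bookkeeping of pairs $(A_\alpha,B_\alpha)$ with $A_\alpha\cup B_\alpha\subseteq\alpha$ and of $[\omega]^\omega$, the same invariants, the same construction of $X_\alpha$ as a union of sets of the form $b_n\setminus\bigcup_{m<n}b_m$, and your filter/pseudo-intersection construction of $a_\alpha$ is just the paper's diagonal choice of $p_n\in Y_\alpha\cap\bigcap_{j\le n}X^j_{i(j)}$ in different clothing. The one hurdle you leave open --- the finite intersection property of $\mathcal{F}_0=\{\omega\setminus b_n:n<\omega\}$ --- is not actually a problem: the paper sidesteps it by taking $a_\alpha\subseteq Y_\alpha$ when no prior element meets $Y_\alpha$ infinitely (and $a_\alpha=\emptyset$ otherwise), so almost disjointness comes for free and the complements never enter the argument; and in any case, once $\{b_n:n<\omega\}$ is an \emph{infinite} almost disjoint family, no finite union $\bigcup_{n\in G}b_n$ can be cofinite (any $b_m$ with $m\notin G$ is almost disjoint from it), so $\mathcal{F}_0$ automatically has the strong finite intersection property and your planned ``parallel diagonalization'' is unnecessary --- you need only seed the recursion with an infinite almost disjoint family, e.g.\ a partition of $\omega$ into infinitely many infinite pieces as in Example \ref{MadLuzinQuasi}, to avoid trouble at the finite stages.
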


\begin{proof}
Let $\{(A_\beta,B_\beta)\in [\omega_1]^\omega \times [\omega_1]^\omega :  \omega \le \beta < \omega_1\}$ list all disjoint pairs of countable subsets of $\omega_1$ in such a way that for each $\omega \le \beta < \omega_1$, $A_\beta \cup B_\beta \subseteq \beta$. In addition, list $[\omega]^\omega$ as $\{Y_\alpha:  \omega \le \alpha < \omega_1\}$.\\
Let $\omega \le \alpha <\omega_1$ and assume that for each $\omega \le \beta < \alpha$, the sets $X_\beta$, $a_\beta \subset \omega$ have been defined such that:

\begin{enumerate}[nosep]
\item For each $\gamma \in A_\beta: a_\gamma \subseteq^* X_\beta$,
\item For each $\gamma \in B_\beta: a_\gamma \cap  X_\beta =^* \emptyset$,
\item For each $\gamma < \alpha: a_\gamma \subseteq^* X_\beta$ or $a_\gamma \cap  X_\beta =^* \emptyset$,
\item If there is $\gamma < \beta$ such that $|a_\gamma \cap Y_\beta| = \omega$, then $a_\beta = \emptyset$. Otherwise, $|a_\beta \cap Y_\beta|=\omega$,
\item For each $\eta, \gamma < \alpha$, $a_\eta \cap a_\gamma =^* \emptyset$,
\end{enumerate} 

\noindent Let us construct $X_\alpha$. List $\alpha \smallsetminus B_\alpha$ and $B_\alpha$ as $\alpha \smallsetminus B_\alpha =\{\gamma _n:n\in \omega\}$, $B_\alpha=\{\beta _n:n\in \omega\}$. Since $A_\alpha \cup B_\alpha \subseteq \alpha$, then $A_\alpha \subseteq \alpha \smallsetminus B_\alpha$ and for each $n \in \omega$, $\gamma_n, \beta_n < \alpha$. That is, $a_{\gamma_n},a_{\beta_n}$ have been defined. In addition, for $n \in \omega$, $W_n =a_{\gamma_n}\smallsetminus [\bigcup_{j\le n}a_{\beta_j}]$ is either empty of infinite. Define $X_\alpha = \bigcup_{n\in \omega}W_n$. Observe that $(A_\alpha,B_\alpha)$ and $X_\alpha$ satisfy properties 1. and 2. of the recursive construction.\\
Now let us build $a_\alpha$. Reenumerate $\{X_\beta: \beta <\alpha\}$$\cup \{X_\alpha\}$ as $\{X^n:n\in \omega\}$.  For $n \in \omega$, let $X_1^n = X^n$, $X_0^n = \omega \smallsetminus X^n$. 
If there is $\gamma < \alpha$ such that $|a_\gamma \cap Y_\alpha| = \omega$, then  let $a_\alpha = \emptyset$.
On the other hand, if for each $\gamma < \alpha$, $|a_\gamma \cap Y_\alpha| < \omega$, for $n \in \omega$, pick $i(n) \in \{0,1\}$ so that $Y_\alpha \cap \bigcap_{j \le n}X_{i(j)}^j$ is infinite. For each $n \in \omega$, pick $p_n \in \big[Y_\alpha \cap \bigcap_{j \le n}X_{i(j)}^j\big] \smallsetminus \{p_j:j<n\}$. In this case, let $a_\alpha = \{p_n:n\in\omega\}$. Since $a_\alpha \subseteq Y_\alpha$, then for ech $\beta < \alpha$, $a_\beta \cap a_\alpha$ is finite.\\
This finishes the recursive construction of $X_\alpha$ and $a_\alpha$. Regardless of whether $a_\alpha$ is empty or not, it satisfies properties 4. and 5. In addition, it holds true that for each $\gamma, \beta \le \alpha$: $a_\gamma \subseteq ^* X_\beta$ or $a_\gamma \cap X_\beta =^* \emptyset$. Thus, property 3. is satisfied.\\
Let $\mathcal{A} = \{a_\alpha: \omega \le \alpha < \omega_1 \textit{ and } a_\alpha \neq \emptyset \}$. Observe that properties 4. and 5. guarantee that $\mathcal{A}$ is a mad family. Properties 1., 2. and 3. guarantee that $\mathcal{A}$ is strongly $\aleph_0$-separated. Hence, $\mathcal{A}$ is the desired family.
\end{proof}

\section{Questions and Remarks}

\noindent We don't even have consistent examples to answer the following questions: 

\begin{question} Is there a partly-normal not quasi-normal almost disjoint family?
\end{question}

\begin{question} Is there an almost-normal not normal almost disjoint family?
\end{question}

\begin{question} Is there an almost-normal mad family?
\end{question}

\noindent If $\mathcal{A}$ is mad, $\Psi(\mathcal{A})$ is a pseudocompact and not countably compact space. Recall that normal pseudocompact spaces are countably compact and so it is natural to ask the following more general question

\begin{question} Are almost-normal pseudocompact spaces countably compact?
\end{question}

\noindent Since $\Psi$-spaces are always Tychonoff and not countably compact, the existence of an almost-normal mad family would answer this question in the negative. 

\noindent Finally, we have not considered the relationship between these weakenings of normality and countable paracompactness: 

\begin{question} Is there a relationship between countably paracompact and any of  these weakenings of normality?

\end{question}

\section*{Acknowledgements}
The first author was partly supported for this research by the Consejo Nacional de Ciencia y Tecnolog\'ia CONACYT, M\'exico, Scholarship 411689.

\small
\baselineskip=5pt

\textsc{Department of Mathematics and Statistics, York University, 4700 Keele St. Toronto, ON M3J 1P3 Canada}\par\nopagebreak
\textit{Email address}: S. A. Garcia-Balan: \texttt{sgarciab@yorku.ca}\\ Paul J. Szeptycki: \texttt{szeptyck@yorku.ca}


\begin{thebibliography}{99}

\bibitem{AS} S. Arya, M. Singal \emph{Almost normal and almost completely regular spaces}, Kyungpook Math. J., Volume 25 ,1 (1970), 141-152.

\bibitem{E} R. Engelking, \emph{General Topology}, Heldermann Verlag, Berlin, Sigma Series in Pure Mathematics 6, 1989.

\bibitem{GS} F. Galvin, P. Simon, \emph{A \v{C}ech function in ZFC}, Fund. Math. 193 (2007), 181-188.

\bibitem{GJ} L. Gillman, M. Jerison, \emph{Rings of Continuous Functions}, Van Nostrand, Princeton, NJ. 1960.

\bibitem{HH} F. Hernández-Hernández, M. Hrušák, \emph{Topology of Mrówka-Isbell Spaces}, in: Hrušák M., Tamariz-Mascarúa Á., Tkachenko M. (eds) Pseudocompact Topological Spaces. Developments in Mathematics, vol 55. Springer, Cham (2018).

\bibitem{HS} M. Hru\v s\'ak, P. Simon, \emph{Completely separable MAD families}, in Open Problems in Topology II, Edited by Elliott Pearl, 2007, Elsevier.

\bibitem{K2002} L. Kalantan, \emph{Results about $\kappa$-normality}, Topol. Appl. 125, 1 (2002), 47-62.

\bibitem{K2008} L. Kalantan, \emph{$\pi$-normal topological spaces}, Filomat 22 (2008),  1, 173–181.

\bibitem{K2017} L. Kalantan, \emph{A Mr\'owka space which is quasi normal}, May 14, 2017. Preprint.

\bibitem{KS} L. Kalantan, P. Szeptycki, \emph{$k$-normality and products of ordinals}, Topology and its Applications, 123, 3 (2002) 537-545.

\bibitem{Mr} S. Mr\'owka, \emph{On completely regular spaces}, Fund. Math. 41 (1955) 105-106.

\bibitem{Lu} N. N. Luzin, \emph{On subsets of the series of natural numbers}, Izvestiya Akad. Nauk SSSR. Ser. Mat., 11 (1947), 403-410.

\bibitem{Shch} E. V. Shchepin, \emph{Real function and near normal spaces}, Sibirskii Mat. Zurnal, 13 (1972), 1182-1196.

\bibitem{SS} A.R. Singal, M. K .Singal, \emph{Mildly normal spaces}, Kyungpook Math. J.,
 13 (1973), 27-31.

\bibitem{Za} V. Zaitsev, \emph{On certain classes of topological spaces and their bicompactifications}, Dokl. Akad. Nauk SSSR, 178 (1968), 778-779.

\end{thebibliography}
\end{document}